\tikzstyle{level12}=[draw,text centered,minimum size=2em,text width=1.8cm,text height=0.34cm]
\tikzstyle{cases}=[draw,text centered,minimum size=2em,text width=2.75cm,text height=0.34cm]
\tikzstyle{finite}=[draw,text centered,minimum size=2em,text width=4cm,text height=4.9cm, dashed]
\DeclareMathOperator{\Prox}{Prox}
\DeclareMathOperator*{\argmin}{arg\,min}
\DeclareMathOperator*{\argmax}{arg\,max}
\newcommand{\relint}{\mathbf{relint}}
\newcommand{\ran}{\mathbf{ran}}
\begin{document}

\title{A New Use of Douglas-Rachford Splitting and ADMM
 for Identifying Infeasible, Unbounded, and Pathological Conic Programs
\thanks{This work is supported in part by NSF grant DMS-1720237 and ONR grant N000141712162.}
}


\titlerunning{A New Use of DRS and ADMM
 for Identifying Infeasible, Unbounded, and Pathological Conic Programs}        

\author{Yanli Liu         \and
        Ernest K. Ryu     \and
        Wotao Yin
}


\institute{Yanli Liu \and
        Ernest K. Ryu     \and
        Wotao Yin\at
              Department of Mathematics, UCLA, Los Angeles, CA 90095-1555. \\
              \email{yanli\,/\,eryu\,/\,wotaoyin@math.ucla.edu}           
           \and
           \and
}

\date{\today}

\maketitle


\begin{abstract}
In this paper, we present a method for identifying
infeasible, unbounded, and pathological conic programs based on Douglas-Rachford splitting,
or equivalently ADMM.
When an optimization program is infeasible, unbounded, or pathological,
the iterates of Douglas-Rachford splitting diverge.
Somewhat surprisingly, such divergent iterates
still provide useful information, which our method
uses for identification.
In addition, for strongly
infeasible problems
the method produces a separating hyperplane
and informs the user on how to minimally
modify the given problem
to achieve strong feasibility.
As a first-order method, the proposed algorithm
relies on simple subroutines,
and therefore is simple to implement and has
low per-iteration cost.
\keywords{Douglas-Rachford Splitting \and infeasible, unbounded, pathological, conic programs}
\end{abstract}

\section{Introduction}
Many convex optimization algorithms
have strong theoretical guarantees and empirical performance, but they are often limited to non-pathological, feasible problems;
under pathologies often the theory breaks down
and the empirical performance degrades significantly.
In fact, the behavior of convex optimization algorithms
under pathologies has been studied much less,
and many existing solvers often simply report ``failure''
without informing the users of what went wrong
upon encountering infeasibility, unboundedness, or pathology.
Pathological problem are numerically challenging,
but they are not impossible to deal with.
As infeasibility, unboundedness, and pathology do arise in practice (see, for example, \cite{Lofberg2009_pre,LoeraMalkinParrilo2012_computation}),
designing a robust algorithm that behaves well in all cases
is important to the completion of a robust solver.

In this paper, we propose a method based on Douglas-Rachford splitting (DRS),
or equivalently ADMM, 
that identifies infeasible, unbounded, and pathological
conic programs. First-order methods such as DRS/ADMM
are simple and can quickly provide a solution with moderate accuracy.
It is well known, for example, by combining Theorem 1 of \cite{Rockafellar1976_monotone} and Proposition 4.4 of \cite{Eckstein1989_splitting}, that the iterates of DRS/ADMM converge to a fixed point if there is one
(a fixed point $z^*$ of an operator $T$ satisfies $z^*=Tz^*$),
and when there is no fixed point, 
the iterates diverge unboundedly.
However, the precise manner in which they diverge has been studied much less.
Somewhat surprisingly, when iterates of DRS/ADMM diverge,
the behavior of the iterates still provides useful information, which we use to classify the conic program. For example, a separating hyperplane can be found when the conic program is strongly infeasible, and an improving direction can be obtained when there is one.
When the problem is infeasible or weakly feasible, it is useful to know
how to minimally modify the problem data to achieve strong feasibility.
We also get this information via the divergent iterates.

Facial reduction is one approach to handle infeasible or pathological conic programs.
Facial reduction reduces an infeasible or pathological problem
into a new problem that is strongly feasible, strongly infeasible, or unbounded with an improving direction,
which are the easier cases \cite{BorweinWolkowicz1981_facial,BorweinWolkowicz1981_regularizing,Pataki2000_simple,WakiMuramatsu2013_facial}.

Many existing methods such as interior point methods or homogeneous self-dual embedding \cite{LuoSturmZhang2000_conic,Yoshise2012_complementarity}
cannot directly handle certain pathologies, such as weakly feasible or weakly infeasible problems,
and are forced to use facial reduction \cite{LourencoMuramatsuTsuchiya2015_solvinga,PermenterFribergAndersen2015_solving}.
However, facial reduction introduces a new set of computational issues.
After completing the facial reduction step, which has its own the computational challenge and cost,
the reduced problem must be solved.
The reduced problem involves a cone 
expressed as an intersection of the original cone with an linear subspace,
and in general such cones neither are self-dual nor have a simple formula for projection.
This makes applying an interior point method or a first-order method difficult,
and existing work on facial reduction do not provide an efficient way to address this issue.

In contrast, our proposed method directly address infeasibility, unboundedness, and pathology.
Some cases are always identified, and some are identifiable under certain conditions.
Being a first-order method, the proposed algorithm relies on simple subroutines;
each iteration performs projections onto the cone and the affine
space of the conic program and 
elementary operations such as vector addition.
Consequently, the method is simple to implement and has a lower per-iteration cost than interior point methods.

\subsection{Basic definitions}\label{Sec.  2.1}
\paragraph{Cones.}
A set $K\subseteq \mathbb{R}^n$ is a cone if $K=\lambda K$ for any $\lambda>0$. 
We write and define the dual cone of $K$ as
\[
K^*=\{u\in \mathbb{R}^n|\,\,u^Tv\geq 0,\,\, \text{for all}\,\, v \in K\}.
\]

Throughout this paper, we will focus on nonempty closed convex cones 
that we can efficiently project onto.
In particular, we do \emph{not} require that the cone be self-dual.
Example of such cones include:
\begin{itemize}
\item
The positive orthant: 
\[
\mathbb{R}_+^k=\{x\in \mathbb{R}^k\,|\,x_i\ge 0,\,i=1,\dots,n\}
\]
\item
Second order cone:
$$Q^{k+1}=\left\{(x_1,\dots,x_k, x_{k+1})\in \mathbb{R}^k\times \mathbb{R}_{+}\,|\,x_{k+1}\geq \sqrt{x_1^2+\dots+x_k^2}\right\}$$
 \item
 Rotated second order cone:
 $$Q_r^{k+2}=\left\{(x_1,\dots,x_k, x_{k+1}, x_{k+2})\in\mathbb{R}^k\times \mathbb{R}^2_{+}\,|\,2x_{k+1}x_{k+2}\geq x_1^2+\dots+x_k^2\right\}.$$
 \item Positive semidefinite cone:
 $$S^k_{+}=\{M=M^T\in \mathbb{R}^{k\times k}|\,\, x^TMx\geq 0\,\, \text{for any}\,\, x\in \mathbb{R}^k\}$$

\end{itemize}

\paragraph{Conic programs.}
Consider the conic program
\begin{equation}
\begin{array}{ll}
\mbox{minimize}& c^Tx\\
\mbox{subject to}& Ax=b \\
\mbox{} &          x\in K,
\end{array}
\tag{P}\label{P}
\end{equation}
where $x\in \mathbb{R}^n$ is the optimization variable,
$c\in \mathbb{R}^n$, $A \in \mathbb{R}^{m\times n}$, and $b\in \mathbb{R}^m$
are problem data, and $K\subseteq \mathbb{R}^n$ is a nonempty closed convex cone.
We write $p^\star=\inf\{c^Tx\,|\,Ax=b,\, x\in K\}$ to denote the optimal value of \eqref{P}.
For simplicity, we assume $m\le n$ and $A$ is full rank.

The dual problem of \eqref{P} is
\begin{equation}
\begin{array}{ll}
\mbox{maximize}& b^Ty\\
\mbox{subject to}& A^Ty+s=c\\
\mbox{}& s\in K^*,
\end{array}
\tag{D}\label{D}
\end{equation}
where $y\in \mathbb{R}^m$ and $s\in \mathbb{R}^n$ are the optimization variables.
We write $d^\star=\sup\{b^Ty\,|\,A^Ty+s=c,\, s\in K^*\}$ to denote the optimal value of \eqref{D}.

The optimization problem \eqref{P} is either feasible or infeasible;
\eqref{P} is feasible if there is an $x\in K\cap \{x\,|\,Ax=b\}$
and infeasible if there is not.
When \eqref{P} is feasible, it is 
strongly feasible if there is an
$x\in \relint K\cap \{x\,|\,Ax=b\}$
and weakly feasible if there is not,
where $\relint$ denotes the relative interior.
When \eqref{P} is infeasible, it is strongly
infeasible if there is a non-zero distance between $K$ and $\{x\,|\,Ax=b\}$,
i.e., $d(K,\{x\,|\,Ax=b\})>0$,
and weakly infeasible if $d(K,\{x\,|\,Ax=b\})=0$,
where
\[
d(C_1, C_2)=\inf
\left\{\|x-y\|
\,|\,
x\in C_1,\, y\in C_2
\right\},
\]
and $\|\cdot\|$ denotes the Euclidean norm.
Note that $d(C_1, C_2)=0$ does not necessarily imply $C_1$ and $C_2$
intersect.
When \eqref{P} is infeasible, we say $p^\star=\infty$,
and when feasible,  $p^\star\in \mathbb{R}\cup \{-\infty\}$.
Likewise, 
when \eqref{D} is infeasible, we say $d^\star=-\infty$,
and when  feasible,  $d^\star\in \mathbb{R}\cup \{\infty\}$.

As special cases, 
\eqref{P} is called a linear program
when $K$ is the positive orthant,
a second-order cone program when $K$
is the second-order cone,
and 
a semidefinite program 
when $K$ is the positive semidefinite cone.

\subsection{Classification of conic programs}
Every conic program of the form \eqref{P} falls under exactly one of the following $7$ cases (some of the following examples are taken from \cite{LuoSturmZhang2000_conic,LuoSturmZhang1997_duality,LourencoMuramatsuTsuchiya2015_solvinga,LuenbergerYe2016_conic}). Discussions on most of these cases exist in the literature.
Some of these cases have a corresponding dual characterization,
but we skip this discussion as it is not directly relevant to our method. We report the results of SDPT3, SeDuMi, and MOSEK using their default settings.
In Section~\ref{s:theory}, we discuss how to identify most of these $7$ cases.

\paragraph{Case (a).}
$p^{\star}$ is finite, both \eqref{P} and \eqref{D} have solutions, and $d^\star=p^\star$,
which is the most common case.
For example, the problem
\begin{equation*}
\begin{array}{ll}
\mbox{minimize}& x_3\\
\mbox{subject to}& x_1=1 \\
&x_3\ge \sqrt{x_1^2+x_2^2}
\end{array}
\end{equation*}
has the solution $x^{\star}=(1,0,1)$ and $p^\star=1$.
(The inequality constraint corresponds to $x\in Q^3$.)
SDPT3, SeDuMi and MOSEK can solve this example.

The dual problem, after some simplification, is
\[
\begin{array}{ll}
\mbox{maximize}& y\\
\mbox{subject to} &1\ge y^2,
\end{array}
\]
which has the solution $y^\star=1$ and $d^\star=1$.

\paragraph{Case (b).}
$p^{\star}$ is finite, \eqref{P} has a solution, but \eqref{D} has no solution, or 
$d^\star<p^\star$, or both.
For example, the problem
\begin{equation*}
\begin{array}{ll}
\mbox{minimize}& x_2\\
\mbox{subject to}& x_1=x_3=1\\
&x_3\ge \sqrt{x_1^2+x_2^2}
\end{array}
\end{equation*}
has the solution $x^{\star}=(1,0,1)$ and optimal value $p^\star=0$.
(The inequality constraint corresponds to $x\in Q^3$.)

In this example, SDPT3 reports ``Inaccurate/Solved'' and $-2.99305\times 10^{-5}$ as the optimal value; SeDuMi reports ``Solved'' and $-1.54566\times 10^{-4}$ as the optimal value; MOSEK reports ``Solved'' and $-2.71919\times 10^{-8}$ as the optimal value.

The dual problem, after some simplification, is
\[
\begin{array}{ll}
\mbox{maximize}& y_1-\sqrt{1+y_1^2}.
\end{array}
\]
By taking $y_1\rightarrow \infty$ we achieve the dual optimal value $d^\star=0$,
but no finite $y_1$ achieves it.

As another example, the problem
\[
\begin{array}{ll}
\mbox{minimize}& 2x_{12}\\
\mbox{subject to}&
X=
\begin{bmatrix}
x_{11}& x_{12} & x_{13}\\
x_{12}& 0 & x_{23}\\
x_{13}& x_{23} & x_{12}+1
\end{bmatrix}
\in S_+^3,
\end{array}
\]
has the solution
\[
X^\star=
\begin{bmatrix}
0&0&0\\
0&0&0\\
0&0&1
\end{bmatrix}
\]
and optimal value $p^\star=0$.

The dual problem, after some simplification, is
\[
\begin{array}{ll}
\mbox{maximize}& 2y_2\\
\mbox{subject to}&
\begin{bmatrix}
0& y_2+1& 0\\
y_2+1& -y_1& 0\\
0&0&-2y_2
\end{bmatrix}
\in S_+^3,
\end{array}
\]
which has the solution $y^\star=(0,-1)$ and optimal value $d^\star=-2$.

In this SDP example, SDPT3 reports ``Solved'' and $-2$ as the optimal value; SeDuMi reports ``Solved'' and $-0.602351$ as the optimal value; MOSEK reports ``Failed'' and does not report an optimal value.

Note that case (b) can happen only when \eqref{P} is weakly feasible,
by standard convex duality \cite{Rockafellar1974_conjugate}.

\paragraph{Case (c).}
\eqref{P} is feasible, $p^{\star}$ is finite, 
but there is no solution. For example, the problem
\begin{equation*}
\begin{array}{ll}
\mbox{minimize}& x_3\\
\mbox{subject to}& x_1=\sqrt{2} \\
&2x_2x_3\ge x_1^2\\
&x_2, x_3\ge 0
\end{array}
\end{equation*}
has an optimal value $p^{\star}=0$
but has no solution since any feasible $x$ satisfies $x_3> 0$.
(The inequality constraints correspond to $x\in Q_r^3$.)

In this example, SDPT3 reports ``Inaccurate/Solved'' and $7.9509\times 10^{-5}$ as the optimal value; SeDuMi reports ``Solved'' and $8.75436\times 10^{-5}$ as the optimal value; MOSEK reports ``Solved'' and $4.07385\times 10^{-8}$ as the optimal value.

\paragraph{Case (d).}
\eqref{P} is feasible, $p^\star=-\infty$,
and there is an improving direction, 
i.e., there is a $u\in \mathcal{N}(A)\cap K$ satisfying $c^Tu<0$. For example, the problem
\begin{equation*}
\begin{array}{ll}
\mbox{minimize}& x_1\\
\mbox{subject to}& x_2=0\\ 
&x_3\ge \sqrt{x_1^2+x_2^2}
\end{array}
\end{equation*}
has an improving direction $u=(-1,0,1)$.
If $x$ is any feasible point, $x+tu$ is feasible for $t\ge 0$, and the objective value
goes to $-\infty$ as $t\rightarrow \infty$.
(The inequality constraint corresponds to $x\in Q^3$.)

In this example, SDPT3 reports ``Failed'' and does not report an optimal value; SeDuMi reports ``Unbounded'' and $-\infty$ as the optimal value; MOSEK reports ``Unbounded'' and $-\infty$ as the optimal value.

\paragraph{Case (e).}
\eqref{P} is feasible, $p^\star=-\infty$,
but there is no improving direction, 
i.e., there is no $u\in \mathcal{N}(A)\cap K$ satisfying $c^Tu<0$. 
For example, consider the problem
\begin{equation*}
\begin{array}{ll}
\mbox{minimize}& x_1\\
\mbox{subject to}& x_2=1 \\
&2x_2x_3\ge x_1^2\\
&x_2, x_3\ge 0.
\end{array}
\end{equation*}
(The inequality constraints correspond to $x\in Q_r^3$.)
Any improving direction $u=(u_1,u_2,u_3)$ would satisfy $u_2=0$,
and this in turn, with the cone constraint, implies $u_1=0$ and $c^Tu=0$. 
However, even though there is no improving direction,
we can eliminate the variables $x_1$ and $x_2$ to verify that
\[
p^{\star}=\inf\{-\sqrt{2x_3}\,|\, x_3\geq 0\}=-\infty.
\]

In this example, SDPT3 reports ``Failed'' and does not report an optimal value; SeDuMi reports ``Inaccurate/Solved'' and $-175514$ as the optimal value; MOSEK reports ``Inaccurate/Unbounded'' and $-\infty$ as the optimal value.

\paragraph{Case (f).} Strongly infeasible,
where $p^{\star}=\infty$ and $d(K,\{x\,|\,Ax=b\})>0$.
For example, the problem
\begin{equation*}
\begin{array}{ll}
\mbox{minimize}& 0\\
\mbox{subject to}& x_3=-1\\
&x_3\ge \sqrt{x_1^2+x_2^2}
\end{array}
\end{equation*}
satisfies $d(K,\{x\,|\,Ax=b\})=1$.
(The inequality constraint corresponds to $x\in Q^3$.)

In this example, SDPT3 reports ``Failed'' and does not report an optimal value; SeDuMi reports ``Infeasible'' and $\infty$ as the optimal value; MOSEK reports ``Infeasible'' and $\infty$ as the optimal value.

\paragraph{Case (g).} Weakly infeasible,
where $p^{\star}=\infty$ but $d(K,\{x\,|\,Ax=b\})=0$.
For example, the problem
\begin{equation*}
\begin{array}{ll}
\mbox{minimize}& 0\\
\mbox{subject to}& 
\begin{bmatrix}
0,1,1\\
1,0,0
\end{bmatrix}x=
\begin{bmatrix}
0\\
1
\end{bmatrix}\\ 
&x_3\ge \sqrt{x_1^2+x_2^2}
\end{array}
\end{equation*}
satisfies $d(K,\{x\,|\,Ax=b\})=0$,
since
\[
d(K,\{x\,|\,Ax=b\})\leq \|(1,-y,y)-(1,-y,\sqrt{y^2+1})\|\rightarrow 0
\]
as $y\rightarrow \infty$.
(The inequality constraint corresponds to $x\in Q^3$.)

In this example, SDPT3 reports ``Infeasible'' and $\infty$ as the optimal value; SeDuMi reports ``Solved'' and $0$ as the optimal value; MOSEK reports ``Failed'' and does not report an optimal value.

\paragraph{Remark.}
In the case of linear programming,
i.e., when $K$ in \eqref{P} is the positive orthant,
there are only three possible cases: (a), (d), and (f).

\subsection{Classification method overview}
At a high level, our proposed method for classifying the 7 cases is quite simple.
Given an operator $T$ and a starting point $z^0$, we call
$z^{k+1}=T(z^k)$ the \emph{fixed point iteration} of $T$.
Our proposed method runs
three similar but distinct fixed-point iterations with the operators
\begin{align}
T_1(z)&=\tilde{T}(z)+x_0-\gamma Dc\nonumber\\
T_2(z)&=\tilde{T}(z)+x_0 \label{Operators}\tag{Operators}\\
T_3(z)&=\tilde{T}(z)-\gamma Dc\nonumber,
\end{align}
where the common operator $\tilde{T}$ and the constants $D, \gamma, x_0$ are defined and explained in Section~\ref{s:theory} below.
We can view $T_1$ as the DRS operator of \eqref{P}, $T_2$ as the DRS operator with $c$ set to 0 in \eqref{P},
and $T_3$ as the DRS operator with $b$ set to 0 in \eqref{P}.
We use
the information provided by the iterates of these fixed-point iterations
to solve \eqref{P} and classify the cases,
based on the theory of Section~\ref{s:theory}
and the flowchart shown in Figure~\ref{Fig.1}
as outlined in Section~\ref{ss:alg} below.

\begin{figure}[!htb]
\centering
    \begin{tikzpicture}[>=latex', auto]
    \node [level12]  (start) {Start};
    
    \node [level12] (infea) [node distance=1cm and 3.5cm,above right=of start] {Infeasible};
    
    \node [level12] (fea) [node distance=1cm and 3.5cm,below right=of start] {Feasible};
    
    \node [cases] (f) [node distance=0.1cm and 3.5cm,above right=of infea] {(f) Strongly infeasible};
    
    \node [cases] (g) [node distance=0.1cm and 3.5cm,below right=of infea] {(g) Weakly infeasible};
    
    \node [cases] (a) [node distance=0.1cm and 3.5cm, right=of fea] 
    {(a) There is a primal-dual solution pair with $d^{\star}=p^{\star}$};
    
    \node[cases](b)[node distance = 1.4cm and 3.5cm, below right=of fea]
    {(b) There is a primal solution but no dual solution or $d^{\star}<p^{\star}$};
    
    \node [cases] (c) [node distance=4.3cm and 3.5cm,below right=of fea] {(c) $p^{\star}$ is finite but there is no solution};
    
    \node [finite] (fi) [node distance=1.0cm and 2.9cm,below right=of fea] {};
    
    \node [cases] (d) [node distance=6.6cm and 3.5cm,below right=of fea] {(d) Unbounded ($p^{\star}=-\infty$) with an improving direction};
    
    \node [cases] (e) [node distance=9.1cm and 3.5cm,below right=of fea] {(e) Unbounded ($p^{\star}=-\infty$) without an improving direction};

    \draw[line width=2pt,->] (start) -- node[above,pos=0.5,align=center] {\textbf{Thm 6}\\\textbf{Alg 2}}($(start.east)+(2,0)$) |- (infea) ;
    
    \draw[line width=2pt,->] (start) -- ($(start.east)+(2,0)$) |- (fea) ;
    
    \draw[line width=2pt,->] (infea) -- node[above,pos=0.5,align=center] {\textbf{Thm 7}\\\textbf{Alg 2}}($(infea.east)+(2,0)$) |- (f) ;
    
    \draw[line width=2pt,->] (infea) -- ($(infea.east)+(2,0)$) |- (g) ;
    
    \draw[line width=2pt,->] (fea)  --($(fea.east)+(0.3,0)$) |- node[above,pos=0.72,align=center] {\textbf{Thm 2}\\\textbf{Alg 1}} (a) ;
    
    \draw[line width=2pt,dashed,->] (fea) -- ($(fea.east)+(0.3,0)$) |- node[above,pos=0.75,align=center] {\textbf{Thm 11,12}\\\textbf{Alg 3}} (fi) ;
    
    \draw[line width=2pt,dashed,->] (fea) -- ($(fea.east)+(0.3,0)$) |- node[above,pos=0.75,align=center] {\textbf{Thm 13}\\\textbf{Alg 1}} (b) ;
    
    \draw[line width=2pt,->] (fea) -- ($(fea.east)+(0.3,0)$) |- node[above,pos=0.75,align=center] {\textbf{Thm 10}\\\textbf{Alg 3}} (d) ;
   
   \end{tikzpicture}
   \caption{The flowchart for identifying cases (a)--(g). A solid arrow means the cases are always identifiable, a dashed arrow means the cases sometimes identifiable.}
   \label{Fig.1}
\end{figure}
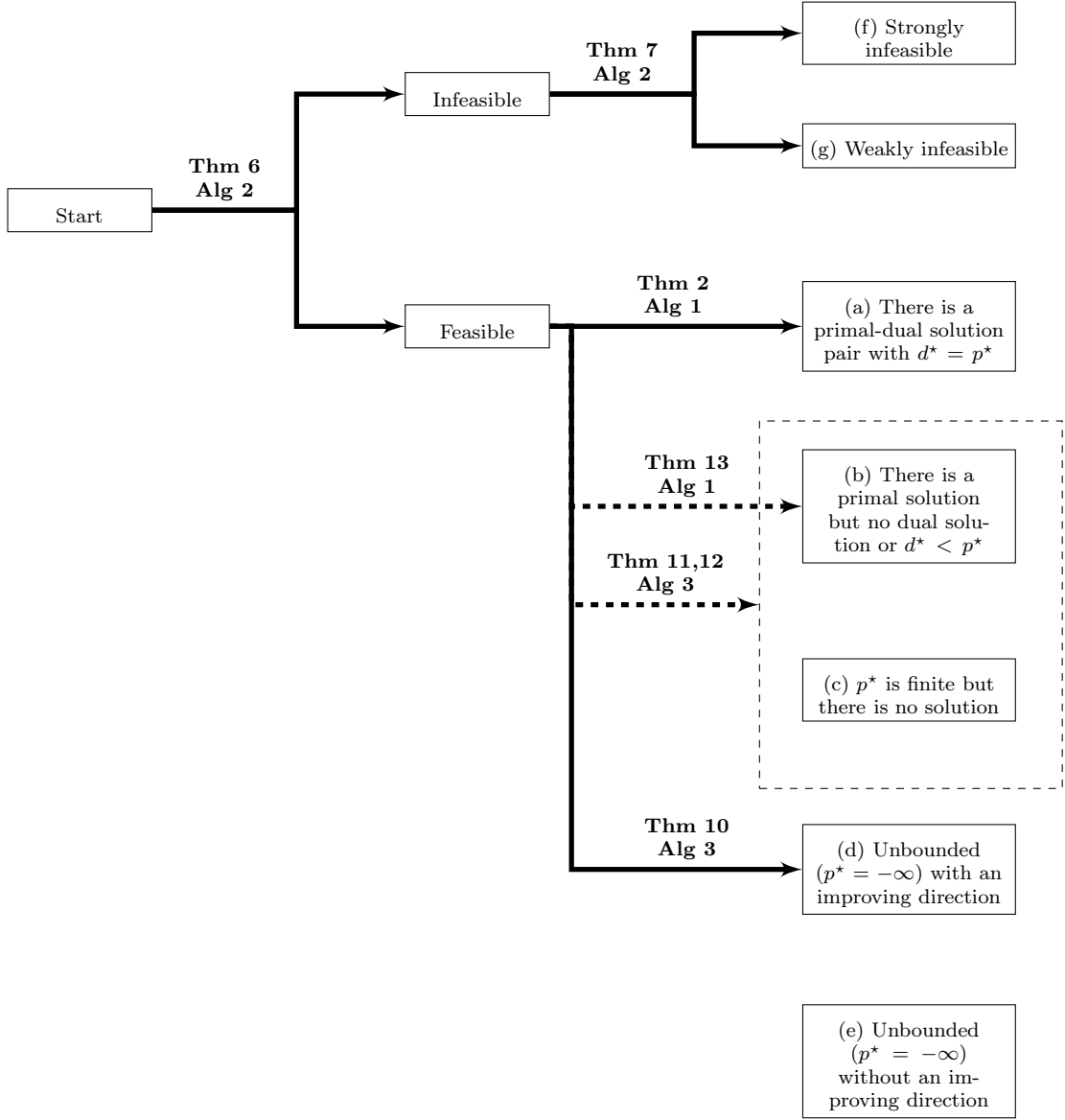

\subsection{Previous work}
Previously,
Bauschke, Combettes, Hare, Luke, and Moursi
have analyzed Douglas-Rachford splitting
in other pathological problems such as:
feasibility problems between 2 convex sets
\cite{BauschkeCombettesLuke2004_finding,bauschke2017}
feasibility problems between 2 convex sets
\cite{bauschke2016b},
and general setups
\cite{bauschke2012,bauschke2014,bauschke2016,Moursi2017_douglasrachford}.
Our work builds on these past results.

\section{Obtaining certificates from Douglas-Rachford Splitting/ADMM}
\label{s:theory}
The primal problem \eqref{P} is equivalent to
\begin{equation}
\begin{array}{ll}
\mbox{minimize}& f(x)+g(x),\\
\end{array}
\label{1}
\end{equation}
where
  \begin{align}
  f(x)&= c^{T}x+\delta_{\{x\,|\,Ax=b\}}(x)\nonumber\\
  g(x)&=\delta_{K}(x)\label{2},
  \end{align}
  and $\delta_C(x)$ is the indicator function of a set $C$ defined as
\[
\delta_C(x)=
\begin{cases}
0 \,\,\,\,\,\text{if}\,\,\, x\in C\\
\infty \,\,\text{if}\,\,\,x\notin C.
\end{cases}
\]

Douglas-Rachford splitting (DRS) \cite{LionsMercier1979_splitting} applied to \eqref{1} is
\begin{align}
x^{k+1/2}&=\Prox_{\gamma g}(z^k)\nonumber\\
x^{k+1}&=\Prox_{\gamma f}(2x^{k+1/2}-z^k)\label{eq:drs}\\
z^{k+1}&=z^k+x^{k+1}-x^{k+1/2},\nonumber
\end{align}
which updates $z^k$ to $z^{k+1}$ for $k=0,1,...$.
Given $\gamma>0$ and function $h$,
\[
\Prox_{\gamma h}(x) = \argmin_{z\in \mathbb{R}^n}\left\{
 h(z)+(1/2\gamma)\|z-x\|^2
\right\}
\]
denotes the proximal operator with respect to $\gamma h$.

\begin{proposition}
The DRS iteration \eqref{eq:drs} can be simplified to  
\begin{equation}
z^{k+1}=\tilde{T}(z^{k})+x_0-\gamma Dc,
\label{eq:drs3}
\end{equation}
which is also $z^{k+1}=T_1(z^k)$ with $T_1$ definied in \eqref{Operators}.
\end{proposition}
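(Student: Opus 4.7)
The plan is to compute the two proximal operators in the DRS iteration explicitly and substitute them into the three-step update to read off $\tilde T$. Since $g=\delta_K$, the first step is immediate: $\Prox_{\gamma g}(z)=\Pi_K(z)$, the Euclidean projection onto $K$. The only non-trivial work is simplifying $\Prox_{\gamma f}$, where $f(x)=c^Tx+\delta_{\{x\mid Ax=b\}}(x)$.

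For $\Prox_{\gamma f}$, I would set up the minimization
\[
\Prox_{\gamma f}(w)=\argmin_{Ax=b}\;\Bigl\{c^Tx+\tfrac{1}{2\gamma}\|x-w\|^2\Bigr\}
\]
and write down the KKT system: $\tfrac{1}{\gamma}(x-w)+c+A^T\lambda=0$ together with $Ax=b$. Because $A$ is full row rank, $AA^T$ is invertible, so solving for $\lambda$ and back-substituting yields
\[
\Prox_{\gamma f}(w)=Dw-\gamma Dc+x_0,
\]
where $D=I-A^T(AA^T)^{-1}A$ is the orthogonal projection onto $\mathcal{N}(A)$ and $x_0=A^T(AA^T)^{-1}b$ is the minimum-norm particular solution of $Ax=b$. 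I expect these are precisely the constants $D$ and $x_0$ alluded to in the proposition.

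Substituting both proximal operators into \eqref{eq:drs} gives $x^{k+1/2}=\Pi_K(z^k)$ and
\[
x^{k+1}=D\bigl(2\Pi_K(z^k)-z^k\bigr)-\gamma Dc+x_0,
\]
so that
\[
z^{k+1}=z^k+x^{k+1}-x^{k+1/2}=(I-D)z^k+(2D-I)\Pi_K(z^k)+x_0-\gamma Dc.
\]
Identifying $\tilde T(z):=(I-D)z+(2D-I)\Pi_K(z)$ gives exactly \eqref{eq:drs3}, and matches $T_1$ in \eqref{Operators}.

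There is no real obstacle here: the argument is a direct KKT calculation followed by algebraic collection of terms. The only subtlety worth flagging is the use of full row rank of $A$ to invert $AA^T$, which is already part of the paper's standing assumption. Everything else is bookkeeping, and the final formula isolates $x_0-\gamma Dc$ as the only part depending on the problem data $(b,c)$, which is what makes the variants $T_2$ and $T_3$ drop out by setting $c=0$ or $b=0$ (equivalently, $x_0=0$).
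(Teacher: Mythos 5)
Your proof is correct and follows essentially the same route as the paper: compute $\Prox_{\gamma g}=P_K$ and $\Prox_{\gamma f}(w)=Dw+x_0-\gamma Dc$ (the paper states this projection formula directly, you derive it via KKT), substitute into \eqref{eq:drs}, and collect terms. Your resulting $\tilde T(z)=(I-D)z+(2D-I)P_K(z)$ is exactly the paper's $\tilde T=\tfrac{1}{2}\bigl(I+R_{\mathcal{N}(A)}R_K\bigr)$ written out in terms of $D$ and $P_K$, so the identification with $T_1$ is valid.
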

\begin{proof}
Given a nonempty closed convex set $C\subseteq \mathbb{R}^n$,
define the projection with respect to $C$ as
\[
P_C(x)=\argmin_{y\in C}\|y-x\|^2
\]
and the reflection with respect to $C$ as
\[
R_{C}(x)=2P_{C}(x)-x.
\]
Write $I$ to denote both the $n\times n$ identity matrix and the identity map
from $\mathbb{R}^n\rightarrow \mathbb{R}^n$.
Write $\mathbf{0}$ to denote the origin point in $\mathbb{R}^n$.
Define 
\begin{align}
    D&=I-A^T(AA^T)^{-1}A\nonumber\\
    x_0&=A^T(AA^T)^{-1}b=P_{\{x\,|\,Ax=b\}}(\mathbf{0}).\label{defx0}
\end{align}
Write $\mathcal{N}(A)$ for the null space of $A$ and $\mathcal{R}(A^T)$ for the 
range of $A^T$.
Then 
\begin{align*}
P_{\{x\,|\,Ax=b\}}(x)&=Dx+x_0,\\
P_{\mathcal{N}(A)}(x)&=Dx.
\end{align*}
Finally, define
\[
\tilde{T}(z)=\frac{1}{2}(I+R_{\mathcal{N}(A)} R_{K})(z).
\]

Now we can rewrite the DRS iteration \eqref{eq:drs} as
\begin{align}
x^{k+1/2}&=P_K(z^k)\nonumber\\
x^{k+1}&=D(2x^{k+1/2}-z^k)+x_0-\gamma Dc\label{eq:drs2}\\
z^{k+1}&=z^k+x^{k+1}-x^{k+1/2},\nonumber
\end{align}
which is equivalent to \eqref{eq:drs3}. \qed
\end{proof}

\paragraph{Relationship to ADMM.}
When we define $\nu^k=(1/\gamma)(z^k-x^k)$ and $\alpha=1/\gamma$,
reorganize, and reorder the iteration, 
the DRS iteration \eqref{eq:drs} becomes
\begin{align*}
x^{k}&=
\argmin_x
\left\{f(x)+x^T\nu^k+\frac{\alpha }{2}\|x-x^{k-1/2}\|^2\right\}
\\
x^{k+1/2}&=
\argmin_x
\left\{g(x)-x^T\nu^k+\frac{\alpha }{2}\|x-x^{k}\|^2\right\}\\
\nu^{k+1}&=\nu^k+\alpha(x^k-x^{k+1/2}),
\end{align*}
which is the alternating direction method of multipliers (ADMM).
In a certain sense, DRS and ADMM are equivalent \cite{Eckstein1989_splitting,EcksteinFukushima1994_reformulations,YanYin2016_self},
and we can equivalently say that the method of this paper
is based on ADMM.

\paragraph{Remark.}
Instead of \eqref{2}, we could have considered the more general form
  \begin{align*}
  f(x)&= (1-\alpha) c^{T}x+\delta_{\{x\,|\,Ax=b\}}(x), \\
  g(x)&= \alpha c^{T}x+\delta_{K}(x)
  \end{align*}
  with  $\alpha\in \mathbb{R}$.
By simplifying the resulting DRS iteration,
one can verify that the iterates are equivalent to the $\alpha=0$ case.
Since the choice of $\alpha$ does not affect the DRS iteration at all,
we will only work with the case $\alpha=0$.

\subsection{Convergence of DRS}
\label{ss:drsconv}
The subdifferential of a function $h:\mathbb{R}^n\rightarrow\mathbb{R}\cup \{\infty\}$ at $x$ is defined as
\[
\partial h(x)=\{u\in \mathbb{R}^n|\,\,h(z)\geq h(x)+u^T(z-x), \forall z\in \mathbb{R}^n\}.
\]

A point $x^{\star}\in\mathbb{R}^n$ is a solution of \eqref{1} if and only if 
\[
\mathbf{0}\in\partial(f+g)(x^{\star}).
\]
DRS, however, converges if and only if 
there is a point $x^{\star}$ such that 
\[
\mathbf{0}\in \partial f(x^{\star})+\partial g(x^{\star})
\]
(since $f$ and $g$ are closed convex proper functions).
In general,
\[
\partial f(x)+\partial g(x)
\subseteq \partial(f+g)(x)
\]
for all $x\in \mathbb{R}^n$,
but the two are not necessarily equal.

For example, consider the functions on $\mathbb{R}^2$
\[
f(x,y)=\left\{
\begin{array}{ll}
y&\text{if }x^2+y^2\le 1\\
\infty&\text{otherwise}
\end{array}
\right.
\qquad
g(x,y)=\left\{
\begin{array}{ll}
0&\text{if }x=1\\
\infty&\text{otherwise.}
\end{array}
\right.
\]
Then $f(x,y)+g(x,y)<\infty $ only at $(x,y)=(1,0)$, and therefore $(1,0)$ minimizes $f+g$. However, 
\[
\partial f(x,y)+\partial g(x,y)=
\left\{
\begin{array}{ll}
\{(a,1)\,|\,a\in \mathbb{R}\}&\text{if }(x,y)=(1,0)\\
\emptyset&\text{otherwise}
\end{array}
\right.
\]
whereas
\[
\partial (f+g)(x,y)=
\left\{
\begin{array}{ll}
\{(a,b)\,|\,a,b\in \mathbb{R}\}&\text{if }(x,y)=(1,0)\\
\emptyset&\text{otherwise.}
\end{array}
\right.
\]

We summarize the convergence of DRS in the theorem below. Its main part is a direct result of Theorem 1 of \cite{Rockafellar1976_monotone} and
Propositions~4.4 and 4.8 of \cite{Eckstein1989_splitting}. The convergence of $x^{k+1/2}$ and $x^{k+1}$ is due to \cite{Svaiter2011_WeakConvergence}. Therefore, we do not prove it.
\begin{theorem}
\label{thm:drsconv}
Consider the iteration \eqref{eq:drs3} with any starting point $z^0$.
If there is an $x$ such that
\[
\mathbf{0}\in \partial f(x)+\partial g(x),
\]
then $z^k$ converges to a limit $z^\star$,
$x^{k+1/2}\rightarrow x^\star=\Prox_{\gamma g}(z^\star)$,
$x^{k+1}\rightarrow x^\star=\Prox_{\gamma g}(z^\star)$,
and 
\[
\mathbf{0}\in \partial f(x^{\star})+\partial g(x^{\star}).
\]
If there is no $x$ such that
\[
\mathbf{0}\in \partial f(x)+\partial g(x),
\]
then $z^k$ diverges in that $\|z^k\|\rightarrow\infty$.
\end{theorem}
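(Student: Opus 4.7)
The plan is to interpret the iteration \eqref{eq:drs3} as a fixed-point iteration of an averaged (firmly nonexpansive) operator on $\mathbb{R}^n$, and then invoke classical Krasnosel'ski\u{\i}--Mann convergence theory together with a characterization of $\mathrm{Fix}(T_1)$ in terms of the inclusion $\mathbf{0}\in \partial f(x)+\partial g(x)$. First I would set up the operator-theoretic framework: because $f$ and $g$ are closed, convex, and proper, $\partial f$ and $\partial g$ are maximal monotone, and the DRS operator admits the standard form $T_1 = \tfrac12 I + \tfrac12 R_{\gamma \partial f} R_{\gamma \partial g}$, where $R_{\gamma A}=2(I+\gamma A)^{-1}-I$ is the reflected resolvent. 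Each $R_{\gamma\partial f}$, $R_{\gamma\partial g}$ is nonexpansive, hence $T_1$ is $\tfrac12$-averaged, which automatically yields Fej\'er monotonicity of $\{z^k\}$ with respect to $\mathrm{Fix}(T_1)$ whenever the latter is nonempty.

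Second, I would establish the fixed-point characterization: $z^\star = T_1 z^\star$ if and only if $x^\star := \Prox_{\gamma g}(z^\star)$ satisfies $\mathbf{0}\in \partial f(x^\star)+\partial g(x^\star)$. Unwinding \eqref{eq:drs}, a fixed point forces $x^{k+1/2}=x^{k+1}=:x^\star$, from which $z^\star - x^\star \in \gamma\partial g(x^\star)$ and $x^\star - z^\star \in \gamma \partial f(x^\star)$; adding gives the inclusion. The converse is a direct reversal. This identification allows one to transfer the hypothesis of the theorem into the statement $\mathrm{Fix}(T_1)\ne\emptyset$.

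Third, for the convergent case I would apply the Krasnosel'ski\u{\i}--Mann theorem for averaged operators: if $T$ is $\alpha$-averaged with $\mathrm{Fix}(T)\ne\emptyset$, the iterates $z^{k+1}=Tz^k$ converge (weakly in Hilbert space, which is strongly here in $\mathbb{R}^n$) to some $z^\star\in \mathrm{Fix}(T)$. Fej\'er monotonicity also gives $\|z^{k+1}-z^k\|\to 0$, hence $\|x^{k+1}-x^{k+1/2}\|\to 0$. Since $\Prox_{\gamma g}$ is firmly nonexpansive, the continuity of the proximal operator yields $x^{k+1/2}\to \Prox_{\gamma g}(z^\star)=x^\star$, and the bound $\|x^{k+1}-x^{k+1/2}\|\to 0$ then forces $x^{k+1}\to x^\star$ as well. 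For the divergent case, I would use the standard dichotomy (Pazy, or Baillon--Bruck--Reich) for averaged operators on Hilbert space: absence of fixed points implies $\|T^k z^0\|\to\infty$; a quick way to see this is that the nonincreasing sequence $\|z^{k+1}-z^k\|$ converges to $\inf_z\|Tz-z\|$, which must be zero if some $\{z^k\}$ stays bounded (by a weak cluster argument, Browder demiclosedness), contradicting $\mathrm{Fix}(T_1)=\emptyset$.

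The main obstacle is the joint convergence (not merely subsequential or cluster-point) of the shadow sequences $\{x^{k+1/2}\}$ and $\{x^{k+1}\}$ to the \emph{same} limit $x^\star$ satisfying the sum inclusion. In finite dimensions much of the subtlety disappears because weak and strong convergence coincide, but even then one must carefully combine $\|z^{k+1}-z^k\|\to 0$, nonexpansiveness of $\Prox_{\gamma g}$, and the $z^k\to z^\star$ limit — essentially reproducing Svaiter's argument \cite{Svaiter2011_WeakConvergence}, which is the only part that does not follow transparently from the classical Rockafellar/Eckstein results already cited.
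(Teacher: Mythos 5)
Your proposal is correct and follows essentially the same route as the paper, which in fact gives no proof of this theorem at all: it simply cites Rockafellar's Theorem~1 and Eckstein's Propositions~4.4/4.8 for the fixed-point dichotomy and Svaiter for the convergence of $x^{k+1/2}$ and $x^{k+1}$ --- exactly the averaged-operator/Krasnosel'ski\u{\i}--Mann facts, fixed-point characterization, and Pazy/Baillon--Bruck--Reich dichotomy you reconstruct. Since everything here lives in $\mathbb{R}^n$, your continuity-of-$\Prox_{\gamma g}$ argument for the shadow sequences is already sufficient, so the delicate Svaiter-type argument you flag as the main obstacle is not actually needed in this setting.
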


DRS can fail to find a solution to \eqref{P} even when one exists.
Slater's constraint qualification
is a sufficient condition that prevents such pathologies:
if \eqref{P} is strongly feasible,
then 
\[
\mathbf{0}\in \partial f(x^\star)+\partial g(x^\star)
\]
for all solutions $x^\star$ \cite[Theorem~23.8]{Rockafellar1970_convex}.
This fact and Theorem~\ref{thm:drsconv}
tell us that 
under Slater's constraint qualifications
DRS finds a solution of \eqref{P} if one exists.

The following theorem, however, provides a stronger, necessary and sufficient characterization of when
the DRS iteration converges. 
\begin{theorem}
\label{thm:0sub}
There is an $x^\star$ such that 
\[
\mathbf{0}\in \partial f(x^\star)+\partial g(x^\star)
\]
if and only if 
$x^\star$ is a solution to \eqref{P}, \eqref{D} has a solution, and $d^\star=p^\star$.
\end{theorem}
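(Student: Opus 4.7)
The plan is to compute $\partial f$ and $\partial g$ explicitly, rewrite $\mathbf{0}\in \partial f(x^\star)+\partial g(x^\star)$ as a KKT system, and then recognize that system as primal-dual optimality with a zero duality gap. Concretely, since $f$ is the sum of a linear function and the indicator of the affine set $\{x\mid Ax=b\}$, I would use $\partial f(x^\star) = c + \mathcal{R}(A^T)$ whenever $Ax^\star = b$ (and $\emptyset$ otherwise), using that the normal cone to an affine set $\{Ax=b\}$ is $\mathcal{R}(A^T)$. For $g = \delta_K$ with $K$ a nonempty closed convex cone, a short argument (plug $y=2x^\star$ and $y=\mathbf{0}$ into the definition of the normal cone) gives $\partial g(x^\star) = \{-s \mid s \in K^\ast,\ s^Tx^\star = 0\}$ whenever $x^\star\in K$.

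Granted these formulas, the inclusion $\mathbf{0}\in \partial f(x^\star)+\partial g(x^\star)$ is equivalent to the existence of $y\in \mathbb{R}^m$ and $s\in \mathbb{R}^n$ satisfying
\[
Ax^\star = b,\qquad x^\star \in K,\qquad A^Ty + s = c,\qquad s\in K^\ast,\qquad s^Tx^\star = 0.
\]
This is precisely the KKT system for the primal-dual pair \eqref{P}--\eqref{D}. For the forward direction, the last three conditions show that $(y,s)$ is dual feasible, and then
\[
c^Tx^\star - b^Ty = c^Tx^\star - (Ax^\star)^Ty = (c - A^Ty)^Tx^\star = s^Tx^\star = 0,
\]
so by weak duality (which always holds) $x^\star$ solves \eqref{P}, $(y,s)$ solves \eqref{D}, and $p^\star = d^\star$.

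For the reverse direction, assume $x^\star$ solves \eqref{P}, $(y^\star,s^\star)$ solves \eqref{D}, and $p^\star = d^\star$. Primal and dual feasibility give the first four KKT relations immediately, and the computation
\[
s^{\star T}x^\star = (c - A^Ty^\star)^Tx^\star = c^Tx^\star - y^{\star T}(Ax^\star) = c^Tx^\star - b^Ty^\star = p^\star - d^\star = 0
\]
supplies complementarity. Setting $u = s^\star \in c + \mathcal{R}(A^T) = \partial f(x^\star)$ and $v = -s^\star \in \partial g(x^\star)$ then yields $u+v=\mathbf{0}$, as required.

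I do not foresee a genuine obstacle here; the only subtlety is being careful with signs in writing $\partial g$ for the cone indicator and in identifying the dual multiplier $y$ with the appropriate element of $\mathcal{R}(A^T)$. Everything else is a bookkeeping exercise relying on standard convex-analytic identities from \cite{Rockafellar1970_convex} and the elementary weak-duality inequality $c^Tx \geq b^Ty$ for any primal feasible $x$ and dual feasible $(y,s)$.
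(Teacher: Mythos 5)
Your proof is correct: the subdifferential formulas $\partial f(x^\star)=c+\mathcal{R}(A^T)$ (when $Ax^\star=b$) and $\partial g(x^\star)=\{-s\mid s\in K^*,\ s^Tx^\star=0\}$ (when $x^\star\in K$) are right, the resulting KKT system is the correct reformulation of $\mathbf{0}\in\partial f(x^\star)+\partial g(x^\star)$, and the weak-duality bookkeeping $c^Tx^\star-b^Ty=s^Tx^\star$ settles both directions. The route, however, differs from the paper's. The paper works with the Lagrangian $\mathcal{L}(x,y,s)=c^Tx+y^T(b-Ax)-s^Tx-\delta_{K^*}(s)$, invokes the known equivalence (cited to Rockafellar's 1974 monograph) between saddle points of $\mathcal{L}$ and the triple ``$x^\star$ solves \eqref{P}, $(y^\star,s^\star)$ solves \eqref{D}, $p^\star=d^\star$,'' and then extracts the conditions $A^Ty^\star+s^\star=c$, $Ax^\star=b$, $x^\star\in K$, $(x^\star)^Ts^\star=0$ by arguing which terms of $\mathcal{L}$ can be driven to $\pm\infty$; the identification of those conditions with the subdifferential inclusion is left implicit. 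Your argument makes exactly that identification explicit via the subdifferential calculus and then replaces the saddle-point theorem with elementary weak duality, so it is more self-contained and arguably fills in the step the paper glosses over; what the paper's approach buys is brevity by outsourcing the optimality-plus-zero-gap characterization to a standard cited result rather than re-deriving it.
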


 Based on Theorem~\ref{thm:drsconv} and \ref{thm:0sub}
 we can determine whether we have case (a)
 with the iteration \eqref{eq:drs3}

 with any starting point $z^0$ and $\gamma>0$.
 \begin{itemize}
 \item
 If $\lim_{k\rightarrow\infty}\|z^k\|<\infty$, we have case (a), and vice versa.
 \item
 If $\lim_{k\rightarrow\infty}\|z^k\|=\infty$, we do not have case (a), and vice versa.
 \end{itemize}
With a finite number of iterations,
we test $\|z^k\|\ge M$ for some large $M>0$.
However, distinguishing the two cases can be numerically difficult
as the rate of $\|z^k\|\rightarrow \infty$ can be very slow.

\begin{proof}[Proof of Theorem~\ref{thm:0sub}]

This result follows from the exposition of  \cite{Rockafellar1974_conjugate}.
but we provide a proof that matches our notation.
 
The Lagrangian of \eqref{P} is
\[
\mathcal{L}(x,y,s)=c^Tx+y ^T(b-Ax)-s^Tx-\delta_{K^*}(s).
\]

We say $(x^\star,y^\star,s^\star)\in \mathbb{R}^n\times \mathbb{R}^m\times\mathbb{R}^n$ is a saddle point of $\mathcal{L}$ if
\begin{align*}
 x^\star&\in\argmin_{x\in \mathbb{R}^n}\mathcal{L}(x,y^\star,s^\star)\\
 (y^\star,s^\star)&\in\argmax_{y\in \mathbb{R}^m,s\in \mathbb{R}^n}\mathcal{L}(x^\star,y,s).
\end{align*}
It is well known that  $(x^\star,y^\star,s^\star)$ is a saddle point of $\mathcal{L}$ if and only if $x^\star$ is a solution to \eqref{P}, $(y^\star,s^\star)$ is a solution to \eqref{D}, and $p^\star=d^\star$ \cite{Rockafellar1974_conjugate}.

Now assume there is a saddle point $(x^\star,y^\star,s^\star)$.
Since $x^\star$ minimizes
$\mathcal{L}(x,y^\star,s^\star)$,
we have
$A^Ty^\star+s^\star-c=0$.
If $A^Ty^\star+s^\star-c\ne 0$,
then the terms of 
$\mathcal{L}(x,y^\star,s^\star)$
that depend on $x$ would be
$\nu^Tx$ for some $\nu \ne 0$.
This allows us to drive the value of 
$\mathcal{L}(x,y^\star,s^\star)$ to $-\infty$,
and there would be no minimizing $x^\star$.
By this same argument,
that
$y^\star$ maximizes
$\mathcal{L}(x^\star,y,s^\star)$
tells us
$Ax^\star=b$.

Since $s^\star$ maximizes
$\mathcal{L}(x^\star,y^\star,s)$,
we have
$x^\star\in K^{**}=K$
and $(x^\star)^Ts^\star=0$. To see why, note that the only terms
in $\mathcal{L}(x^\star,y^\star,s)$ that depend on $s$
are
\[
-(s^Tx^\star+\delta_{K^*}(s))
\]
If $x^\star\notin K^{**}=K$, then, by definition of dual
cones, there is a $s\in K^*$ such that $s^Tx^\star<0$.
By positively scaling this $s$, we can drive the value of $\mathcal{L}(x^\star,y^\star,s)$
to $\infty$, and there would be no maximizing $s^\star$.
If $x^\star\in K$, then 
\[
-(s^Tx^\star+\delta_{K^*}(s))\le 0,
\]
and the maximum is attained by $s=\mathbf{0}$.
So any $s^\star$ must satisfy $(x^\star)^Ts^\star=0$
to maximize $\mathcal{L}(x^\star,y^\star,s)$.

The other direction follows from taking the argument in the
other way.
\end{proof}

\subsection{Fixed-point iterations without fixed points}
\label{ss:fpe}
We say an operator $T:\mathbb{R}^n\rightarrow \mathbb{R}^n$ is nonexpansive if
\[
\|T(x)-T(y)\|^2\le \|x-y\|^2
\]
for all $x,y\in \mathbb{R}^n$. We say $T$ is firmly nonexpansive (FNE) if
\[
\|T(x)-T(y)\|^2\le \|x-y\|^2-\|(I-T)(x)-(I-T)(y)\|^2
\]
for all $x,y\in \mathbb{R}^n$.
(FNE operators are nonexpansive.)
In particular, all three operators defined in \eqref{Operators}
are FNE.
It is well known \cite{DavisYin2016_convergence} that
if a FNE operator $T$ has a fixed point,
its fixed-point iteration $z^{k+1}=T(z^k)$ converges to one
with rate
\[
\|z^{k}-z^{k+1}\|= o(1/\sqrt{k+1}).
\]

Now consider the case where a FNE operator $T$ has no fixed point,
which has been studied to a lesser extent.
In this case, the fixed-point iteration $z^{k+1}=T(z^k)$ diverges in  that $\|z^k\|\rightarrow\infty$ 
\cite[Theorem 1]{Rockafellar1976_monotone}.
Precisely in what manner $z^k$ diverges is characterized by the
\emph{infimal displacement vector} \cite{Pazy1971_asymptotic}.
Given a FNE operator $T$, we call
\[
v=P_{\overline{\ran (I-T)}}(\mathbf{0})
\]
the \emph{infimal displacement vector} of $T$.
To clarify, 
$\overline{\ran (I-T)}$ denotes the closure of the set
\[
\ran (I-T)
=\{x-T(x)\,|\,x\in \mathbb{R}^n\}.
\]
Because $T$ is FNE, the closed set
$\overline{\ran (I-T)}$
is convex \cite{Pazy1971_asymptotic},
so $v$ is uniquely defined.
We can interpret the infimal displacement vector $v$ as the asymptotic output of $I-T$
corresponding to the best effort to find a fixed point.

\begin{lemma}[Corollary 2.3 of \cite{BaillonBruckReich1978_asymptotic}]
\label{lem:infd}
Let $T$ be FNE,
and consider its fixed-point iteration
$z^{k+1}=T(z^k)$
with any starting point $z^0$.
Then
\[
z^{k}-z^{k+1}\rightarrow v=P_{\overline{\ran ({I-T}})}(\mathbf{0}).
\]
\end{lemma}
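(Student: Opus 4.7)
The plan is to first establish that the scalar sequence $\|d_k\| := \|z^k - z^{k+1}\|$ converges monotonically to $\|v\|$, and then promote this norm convergence to vector convergence $d_k \to v$ using uniqueness of the projection onto the closed convex set $\overline{\ran(I-T)}$.

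First I would prove that $\|d_k\|$ is non-increasing. Applying the FNE inequality to the pair $(z^k, z^{k+1})$ and using $z^{k+1} = T(z^k)$, $(I-T)(z^k) = d_k$, I obtain
\[
\|d_{k+1}\|^2 \le \|d_k\|^2 - \|d_k - d_{k+1}\|^2,
\]
so $\|d_k\|$ decreases to some $L \ge 0$. The lower bound $L \ge \|v\|$ is immediate since each $d_k \in \ran(I-T) \subseteq \overline{\ran(I-T)}$ and $v$ is by definition the closest point of this convex set to $\mathbf{0}$.

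Next I would aim to show the matching upper bound $L \le \|v\|$. Given $\epsilon > 0$, I pick $y \in \mathbb{R}^n$ with $\|y - T(y)\| \le \|v\| + \epsilon$ (possible because $v \in \overline{\ran(I-T)}$) and run a parallel orbit $y_{k+1} = T(y_k)$ with $y_0 = y$, setting $u_k := y_k - y_{k+1}$. Applying the FNE inequality to the pair $(z^k, y_k)$ gives
\[
\|z^{k+1} - y_{k+1}\|^2 \le \|z^k - y_k\|^2 - \|d_k - u_k\|^2,
\]
and summing in $k$ yields $\sum_k \|d_k - u_k\|^2 \le \|z^0 - y\|^2 < \infty$, so $\|d_k - u_k\| \to 0$. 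Since the first-step monotonicity applied to the $y$-orbit gives $\|u_k\| \le \|u_0\| \le \|v\| + \epsilon$, the triangle inequality produces $\limsup_k \|d_k\| \le \|v\| + \epsilon$, and letting $\epsilon \downarrow 0$ yields $L = \|v\|$.

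Finally, to upgrade $\|d_k\| \to \|v\|$ to $d_k \to v$, I would invoke finite-dimensional compactness: the bounded sequence $\{d_k\}$ has, along every subsequence, a further subsequence converging to some $w \in \mathbb{R}^n$. Such a limit lies in the closed set $\overline{\ran(I-T)}$ and satisfies $\|w\| = \|v\|$, so uniqueness of the closest point in the convex set $\overline{\ran(I-T)}$ forces $w = v$, and the full sequence must converge to $v$. The main obstacle is the upper bound $L \le \|v\|$ in the second step: unlike the lower bound, it does not follow from FNE of $T$ along a single orbit, but requires comparing the given orbit $\{z^k\}$ against a benchmark orbit launched from a near-optimal point $y$, which is precisely what brings the infimum defining $v$ into play.
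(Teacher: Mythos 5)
Your proof is correct. It shares the paper's central device---comparing the given orbit with a benchmark orbit launched from a near-minimizer $y$ of the displacement, and using firm nonexpansiveness to telescope $\sum_k\|d_k-u_k\|^2\le\|z^0-y\|^2$ so that $\|d_k-u_k\|\to 0$---but the two arguments finish differently. The paper controls the benchmark displacement as a \emph{vector}: since $u_k\in\ran(I-T)$ and $v=P_{\overline{\ran(I-T)}}(\mathbf{0})$, the variational inequality $y^Tv\ge\|v\|^2$ for $y\in\overline{\ran(I-T)}$ gives $\|u_k-v\|^2\le\|u_k\|^2-\|v\|^2$, which together with the monotone bound $\|u_k\|\le\|u_0\|\le\|v\|+O(\varepsilon^2)$ makes $\|u_k-v\|\le\varepsilon/2$, and then the triangle inequality yields $d_k\to v$ directly, along with the quantitative $M_\varepsilon/\sqrt{k+1}+\varepsilon/2$ rate of Theorem~3. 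You instead extract only norm convergence $\|d_k\|\to\|v\|$ from the comparison and then upgrade to $d_k\to v$ by finite-dimensional compactness plus uniqueness of the minimum-norm point of the closed \emph{convex} set $\overline{\ran(I-T)}$ (convexity, which your uniqueness step silently needs, is exactly what the paper justifies via Pazy before stating the lemma, so it is available). Your route is perfectly adequate for the lemma as stated, but it is purely qualitative and leans on $\mathbb{R}^n$; replacing the compactness step by the same inequality $\|w-v\|^2\le\|w\|^2-\|v\|^2$ applied to $w=u_k$ (or to $d_k$ after a small perturbation argument) would make it dimension-free and recover the approximate rate the paper also wants for its stopping tests.
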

In  \cite{BaillonBruckReich1978_asymptotic}, Lemma~\ref{lem:infd} is proved in generality for 
nonexpansive operators, but we provide a simpler proof in our setting in Theorem~\ref{thm:fperate}.

When $T$ has a fixed point then $v=\mathbf{0}$, but $v=\mathbf{0}$ is possible even when $T$ has no fixed point.
In the following sections, we use 
Lemma~\ref{lem:infd}
to determine the status of a conic program,
but, in general,
$z^{k}-z^{k+1}\rightarrow v$ has no rate.
However, we only need to determine 
whether 
$\lim_{k\rightarrow\infty}(z^{k+1}-z^k)=\mathbf{0}$ or $\lim_{k\rightarrow\infty}(z^{k+1}-z^k)\ne \mathbf{0}$,
and we do so by checking whether $\|z^{k+1}-z^k\|\ge \varepsilon$ for some tolerance
$\varepsilon>0$.
For this purpose, the following rate of approximate convergence is good enough.
\begin{theorem}
\label{thm:fperate}
Let $T$ be FNE, and consider its fixed point iteration
\[
z^{k+1}=T(z^{k}),
\]
with any starting point $z^0$, then
\[
z^{k}-z^{k+1}\rightarrow v.
\]
And for any $\varepsilon>0$, there is an
$M_\varepsilon>0$ (which depends on $T$, $z^0$, and $\varepsilon$) such that
\[\|v\|\leq\min_{0\leq j\leq k}\|z^j-z^{j+1}\|\leq\|v\|+\frac{M_\varepsilon}{\sqrt{k+1}}+\frac{\varepsilon}{2}.
\]
\end{theorem}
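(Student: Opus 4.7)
The plan is to establish the theorem in four steps: monotonicity of the displacements, the lower bound, the quantitative upper bound with rate, and convergence $d_k \to v$.

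First, since $T$ is FNE and hence nonexpansive, $\|z^{k+1}-z^{k+2}\|=\|Tz^k-Tz^{k+1}\|\le\|z^k-z^{k+1}\|$, so $\{\|z^k-z^{k+1}\|\}$ is nonincreasing and $\min_{0\le j\le k}\|z^j-z^{j+1}\|=\|z^k-z^{k+1}\|$. For the lower bound, each $z^k-z^{k+1}=(I-T)(z^k)$ lies in $\ran(I-T)\subseteq\overline{\ran(I-T)}$; the variational characterization of the projection $v=P_{\overline{\ran(I-T)}}(\mathbf{0})$ gives $\langle y,v\rangle\ge\|v\|^2$ for every $y\in\overline{\ran(I-T)}$, so $\|y\|\ge\|v\|$ by Cauchy--Schwarz. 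Applied to $y=z^k-z^{k+1}$ this gives the lower bound.

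For the rate, given $\varepsilon>0$, select $u\in\mathbb{R}^n$ with $w:=u-Tu\in\ran(I-T)$ satisfying $\|w-v\|\le\varepsilon/2$; in particular $\|w\|\le\|v\|+\varepsilon/2$. Introduce the shifted operator $\tilde T(x):=T(x)+w$. Translation by a constant preserves firm nonexpansiveness, so $\tilde T$ is FNE, and $\tilde T u=Tu+w=u$ exhibits $u$ as a fixed point. Applying the FNE inequality for $\tilde T$ at the pair $(z^k,u)$, and noting $\tilde T z^k=z^{k+1}+w$, yields
\begin{equation*}
\|z^{k+1}+w-u\|^2+\|d_k-w\|^2\le\|z^k-u\|^2, \qquad d_k:=z^k-z^{k+1}.
\end{equation*}
Introducing the Lyapunov-like function $\phi_k:=\|z^k-u+kw\|^2$ and using $z^{k+1}-z^k=-d_k$, one obtains after algebraic manipulation $\phi_{k+1}-\phi_k\le-\|d_k-w\|^2-2k\langle d_k-w,w\rangle$. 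Telescoping over $k=0,\ldots,N-1$ with $\phi_N\ge 0$ produces
\begin{equation*}
\sum_{k=0}^{N-1}\|d_k-w\|^2+2\sum_{k=0}^{N-1}k\langle d_k-w,w\rangle\le\|z^0-u\|^2.
\end{equation*}
The cross-term is controlled by the geometry of $\overline{\ran(I-T)}$: because $d_k$ and $w$ both lie in this convex set, so does the entire segment between them, whose distance from the origin is at least $\|v\|$. When the foot of the perpendicular from the origin lies on this segment, a direct calculation gives $|\langle d_k-w,w\rangle|\le\sqrt{\|w\|^2-\|v\|^2}\,\|d_k-w\|=O(\sqrt{\varepsilon})\|d_k-w\|$; otherwise $\langle d_k-w,w\rangle\ge 0$ and contributes harmlessly. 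Combined with monotonicity of $\|d_k\|$ and a best-iterate step, this yields $\|d_N\|\le\|w\|+M_\varepsilon/\sqrt{N+1}\le\|v\|+\varepsilon/2+M_\varepsilon/\sqrt{N+1}$, with $M_\varepsilon$ depending on $\|z^0-u\|$, $\|v\|$, and $\varepsilon$.

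Convergence follows from the rate: sending $k\to\infty$ gives $\limsup_{k\to\infty}\|d_k\|\le\|v\|+\varepsilon/2$ for every $\varepsilon>0$, so $\lim\|d_k\|=\|v\|$ by the lower bound. Finally, using $\langle d_k,v\rangle\ge\|v\|^2$,
\begin{equation*}
\|d_k-v\|^2=\|d_k\|^2-2\langle d_k,v\rangle+\|v\|^2\le\|d_k\|^2-\|v\|^2\to 0,
\end{equation*}
so $d_k\to v$. The hard part will be the cross-term control: $\sum k\langle d_k-w,w\rangle$ is a priori indefinite and naively of order $N^2$, and only the combination of convexity of $\overline{\ran(I-T)}$ with the projection characterization of $v$ yields the segment-based $O(\sqrt{\varepsilon})$ estimate required to keep the analysis in the regime of the claimed $1/\sqrt{k+1}$ rate.
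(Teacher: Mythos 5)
Your first, second, and last steps are fine: monotonicity of $\|d_k\|$, the lower bound via $\langle y,v\rangle\ge\|v\|^2$ for $y\in\overline{\ran(I-T)}$, and the closing trick $\|d_k-v\|^2\le\|d_k\|^2-\|v\|^2$ are all correct (the last is exactly the inequality the paper also exploits). The genuine gap is the cross-term control in your rate argument, and it is not a technicality. After telescoping $\phi_k=\|z^k-u+kw\|^2$ you must absorb $2\sum_{k=0}^{N-1}k\,\langle d_k-w,w\rangle$, and the factor $k$ is fatal. First, your case split is incomplete: when the foot of the perpendicular from the origin falls beyond the $d_k$ end of the segment, $\langle d_k-w,w\rangle$ can still be negative (one only gets $|\langle d_k-w,w\rangle|\le\|w\|^2-\|v\|^2$ there), so ``otherwise the term is $\ge 0$'' is false. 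Second, and more importantly, even granting the per-term bound $|\langle d_k-w,w\rangle|\lesssim\sqrt{\varepsilon}\,\|d_k-w\|+\varepsilon$, the weighted sum is of size $\sqrt{\varepsilon}\sum_k k\|d_k-w\|+\varepsilon N^2$; since $\|d_k-w\|$ may hover at size $\sim\sqrt{\varepsilon}$ for all $k$ (nothing proved so far excludes this), this budget grows like $\varepsilon N^2$, and the telescoped inequality then only yields $\min_{k<N}\|d_k-w\|\lesssim \sqrt{\varepsilon N}$ rather than $M_\varepsilon/\sqrt{N+1}$ with $M_\varepsilon$ independent of $N$. So the claimed bound $\|d_N\|\le\|w\|+M_\varepsilon/\sqrt{N+1}$, and hence the limsup step and the convergence $d_k\to v$, are not established by this route. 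It is worth noting that your scheme would work perfectly in the special case $v\in\ran(I-T)$: taking $w=v$ makes the cross-term nonnegative automatically, since $\langle d_k-v,v\rangle=\langle d_k,v\rangle-\|v\|^2\ge 0$. The whole difficulty of the theorem is that $v$ may lie only in the closure, and once $w\ne v$ the sign of the cross-term is lost and the weight $k$ amplifies the error.

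The paper avoids weighted sums entirely by comparing two orbits of $T$ itself rather than shifting the operator. Pick $x_\varepsilon$ with $\|x_\varepsilon-T(x_\varepsilon)-v\|$ small; along the auxiliary orbit, the displacements $T^k(x_\varepsilon)-T^{k+1}(x_\varepsilon)$ lie in $\ran(I-T)$, have nonincreasing norm squeezed between $\|v\|$ and $\|v\|+\varepsilon^2/(4(2\|v\|+1))$, and therefore stay within $\varepsilon/2$ of $v$ for every $k$ by the same inner-product inequality you use at the end. Meanwhile firm nonexpansiveness applied to the pair of orbits makes the differences $(T^k(z^0)-T^{k+1}(z^0))-(T^k(x_\varepsilon)-T^{k+1}(x_\varepsilon))$ square-summable with sum at most $\|z^0-x_\varepsilon\|^2$, which delivers the $M_\varepsilon/\sqrt{k+1}$ term with $M_\varepsilon=\|z^0-x_\varepsilon\|$ directly. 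If you want to salvage your shifted-operator idea, you would need an unweighted telescoping (e.g., comparing your orbit to the orbit of $x_\varepsilon$, as the paper does), not the Lyapunov function $\|z^k-u+kw\|^2$.
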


\begin{proof}[Proof of Theorem~\ref{thm:fperate}]
For simplicity, we prove the result for $0<\varepsilon\le 1$,
although the Theorem~\ref{thm:fperate}
is true for $\varepsilon>1$ as well.

Given any $x_\varepsilon$, 
we use the triangle inequality to get
 \begin{align}
 \|z^k-z^{k+1}-v\|&=\|{T}^k(z^0)-T^{k+1}(z^0)-v\|\\
 &\leq \|({T}^k(z^0)-T^{k+1}(z^0))-({T}^k(x_{\varepsilon})-T^{k+1}(x_{\varepsilon}))\|+\|{T}^k(x_{\varepsilon})-T^{k+1}(x_{\varepsilon})-v\|.
 \label{eq:tri}
 \end{align}

To bound the second term,
pick an $x_{\varepsilon}$ such that
 \[
 \|x_{\varepsilon}-T(x_{\varepsilon})-v\|\leq\frac{\varepsilon^2}{4(2\|v\|+1)},
 \]
 which we can do since $v=P_{\overline{\ran (I-{T})}}(\mathbf{0})\in \overline{\ran (I-{T})}$.
 Since $T$ is nonexpansive, we get
 \[
 0\leq\|{T}^k(x_{\varepsilon})-{T}^{k+1}(x_{\varepsilon})\|-\|v\|\leq \frac{\epsilon^2}{4(2\|v\|+1)}.
 \]
 Since $v=P_{\overline{\ran (I-{T})}}(\mathbf{0})$,
 \[
 \|v\|^2\le y^Tv
 \]
 for any $y\in \overline{\ran (I-{T})}$.
 Putting these together we get
 \begin{equation}
 \begin{split}
 \|{T}^k(x_{\varepsilon})-{T}^{k+1}(x_{\varepsilon})-v\|^2 &=\|{T}^k(x_{\varepsilon})-{T}^{k+1}(x_{\varepsilon})\|^2+\|v\|^2-2 ({T}^k(x_{\varepsilon})-\tilde{T}^{k+1}(x_{\varepsilon}))^Tv \\
 & \leq \|{T}^k(x_{\varepsilon})-{T}^{k+1}(x_{\varepsilon})\|^2+\|v\|^2-2\|v\|^2\\
 &= (\|{T}^k(x_{\varepsilon})-{T}^{k+1}(x_{\varepsilon})\|+\|v\|)(\|{T}^k(x_{\varepsilon})-{T}^{k+1}(x_{\varepsilon})\|-\|v\|)\\
 &\leq (2\|v\|+\frac{\varepsilon^2}{4(2\|v\|+1)})\frac{\varepsilon^2}{4(2\|v\|+1)}\\
 &\leq (2\|v\|+1)\frac{\varepsilon^2}{4(2\|v\|+1)}=\frac{\varepsilon^2}{4}
 \end{split}
 \label{eq:sequence}
 \end{equation}
 for $0<\varepsilon\le 1$.

 Now let us bound the first term $\|({T}^k(z^0)-T^{k+1}(z^0))-({T}^k(x_\varepsilon)-T^{k+1}(x_\varepsilon))\|$ on the righthand side of \eqref{eq:tri}. Since $T$ is FNE, we have
\[
\|({T}^k(z^0)-T^{k+1}(z^0))-({T}^k(x_\varepsilon)-T^{k+1}(x_\varepsilon))\|^2 = \|T^k(z^0)-T^k(x_\varepsilon)\|^2-\|T^{k+1}(z^0)-T^{k+1}(x_\varepsilon)\|^2.
\]
Summing this inequality we have
\begin{equation}
\sum^k_{j=0}
\|({T}^k(z^0)-T^{k+1}(z^0))-({T}^k(x_\varepsilon)-T^{k+1}(x_\varepsilon))\|^2\le \|z^0-x_\varepsilon\|^2.
\label{eq:summable}
\end{equation}
\eqref{eq:tri}, \eqref{eq:sequence}, and \eqref{eq:summable} imply that
\[
z^k-z^{k+1}\rightarrow v.
\]
Furthermore,
\[
\min_{0\leq j \leq k}\|z^j-z^{j+1}-v\|\leq \frac{M_{\varepsilon}}{\sqrt{k+1}}+\frac{\varepsilon}{2},
\]
where $M_{\varepsilon}=\|z^0-x_{\varepsilon}\|$. As a result,
\[\|v\|\leq\min_{0\leq j\leq k}\|z^j-z^{j+1}\|\leq\|v\|+\frac{M_\varepsilon}{\sqrt{k+1}}+\frac{\varepsilon}{2}.
\]
\end{proof}

\subsection{Feasibility and infeasibility}
We now return to the specific conic programs.
Consider the operator $T_2$ defined by
$T_2(z)=\tilde{T}(z)+x_0$.
As mentioned, we can view $T_2$ as the DRS operator 
with $c$ set to $\mathbf{0}$ in \eqref{P}.

The infimal displacement vector of $T_2$ has a nice geometric interpretation:
it is the best approximation displacement
between the sets $K$ and $\{x\,|\,Ax=b\}$,
and $\|v\|=d(K,\{x\,|\,Ax=b\})$.
\begin{theorem}[Theorem~3.4 of \cite{BauschkeCombettesLuke2004_finding}, Proposition 11.22 of \cite{Moursi2017_douglasrachford}]
\label{thm:bestapprox}
The operator $T_2$ defined by
$T_2(z)=\tilde{T}(z)+x_0$, where $x_0$ is given in \eqref{defx0},
has the infimal displacement vector
$v=P_{\overline{K-\{x\,|\,Ax=b\}}}(\mathbf{0})$.
\end{theorem}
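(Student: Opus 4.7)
The plan is to recognize $T_2$ as the standard Douglas--Rachford operator for the two-set feasibility problem between the cone $K$ and the affine subspace $\{x\,|\,Ax=b\}$, and then squeeze $\|v\|$ between two copies of $d(K,\{x\,|\,Ax=b\})$.

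First, since $P_{\{x\,|\,Ax=b\}}(x)=Dx+x_0$, the corresponding reflection satisfies $R_{\{x\,|\,Ax=b\}}(x)=R_{\mathcal{N}(A)}(x)+2x_0$, and therefore
\[
T_2(z)=\tfrac{1}{2}(I+R_{\mathcal{N}(A)}R_K)(z)+x_0=\tfrac{1}{2}\bigl(I+R_{\{x\,|\,Ax=b\}}R_K\bigr)(z),
\]
which is precisely the DRS operator for $\delta_K+\delta_{\{x\,|\,Ax=b\}}$. A short expansion (using $R_C=2P_C-I$) then gives the key identity
\[
(I-T_2)(z)=P_K(z)-P_{\{x\,|\,Ax=b\}}\bigl(R_K(z)\bigr),
\]
so $\ran(I-T_2)\subseteq K-\{x\,|\,Ax=b\}$. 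Consequently $v\in\overline{K-\{x\,|\,Ax=b\}}$, and $\|v\|\ge\|v^\star\|$ with $v^\star:=P_{\overline{K-\{x\,|\,Ax=b\}}}(\mathbf{0})$.

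For the reverse bound $\|v\|\le\|v^\star\|$, I would test $I-T_2$ on inputs drawn from the affine subspace. For $x\in\{x\,|\,Ax=b\}$ we have $Dx=x-x_0$ and $R_{\mathcal{N}(A)}(x)=x-2x_0$, so the key identity collapses to
\[
(I-T_2)(x)=R_{\mathcal{N}(A)}\bigl(x-P_K(x)\bigr).
\]
Since reflections are isometries, $\|(I-T_2)(x)\|=\|x-P_K(x)\|=d(x,K)$; taking the infimum yields
\[
\|v\|=\inf_z\|(I-T_2)(z)\|\le\inf_{x\in\{x\,|\,Ax=b\}}d(x,K)=d(K,\{x\,|\,Ax=b\})=\|v^\star\|.
\]
Combined with $\|v\|\ge\|v^\star\|$ and the uniqueness of the nearest point in $\overline{K-\{x\,|\,Ax=b\}}$ to the origin, we obtain $v=v^\star$.

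The main obstacle is the reverse bound; the trick is that testing $I-T_2$ on points of the affine subspace turns the displacement into an isometric image of the projection residual $x-P_K(x)$, so the feasibility gap is realized directly through the displacement operator, avoiding the more elaborate best-approximation-pair machinery of \cite{BauschkeCombettesLuke2004_finding,Moursi2017_douglasrachford}.
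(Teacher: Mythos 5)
Your proof is correct, and it is worth noting that the paper itself does not prove this statement at all: it is quoted from Theorem~3.4 of \cite{BauschkeCombettesLuke2004_finding} and Proposition~11.22 of \cite{Moursi2017_douglasrachford}, which establish the identity for the Douglas--Rachford operator of an arbitrary pair of closed convex sets. Your argument is a self-contained, elementary substitute that works because one of the two sets is affine. I checked the two key computations: indeed $T_2=\tfrac12\bigl(I+R_{\{x\,|\,Ax=b\}}R_K\bigr)$ and $(I-T_2)(z)=P_K(z)-P_{\{x\,|\,Ax=b\}}\bigl(R_K(z)\bigr)$, which gives $\ran(I-T_2)\subseteq K-\{x\,|\,Ax=b\}$ and hence $\|v\|\ge\|v^\star\|$; and for $x$ with $Ax=b$, using $P_{\{x\,|\,Ax=b\}}(y)=Dy+x_0$, $Dx=x-x_0$ and linearity of $R_{\mathcal{N}(A)}=2D-I$, one gets $(I-T_2)(x)=R_{\mathcal{N}(A)}\bigl(x-P_K(x)\bigr)$, so the isometry of the reflection yields $\|v\|=\inf_z\|(I-T_2)(z)\|\le\inf_{Ax=b}d(x,K)=d(K,\{x\,|\,Ax=b\})=\|v^\star\|$. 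Since $v\in\overline{K-\{x\,|\,Ax=b\}}$, which is closed and convex, equality of norms plus uniqueness of the projection of $\mathbf{0}$ gives $v=v^\star$. The trade-off is scope: the cited general theorem covers two arbitrary closed convex sets, where the reverse inequality cannot be obtained by simply evaluating $I-T_2$ on one of the sets (that step exploits exact linearity of $P_{\{x\,|\,Ax=b\}}$ up to translation); your version buys a short, reference-free proof adequate for the conic setting actually used in the paper, and it is compatible with the paper's Lemma~\ref{lem:v_interpret}. One cosmetic point: state explicitly that $K-\{x\,|\,Ax=b\}$ is convex (difference of a convex cone and an affine set), so that the projection $v^\star$ and the final uniqueness step are justified.
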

We can further understand $v$ in terms of the projection
$P_{\overline{P_{\mathcal{R}(A^T)}(K)}}$.
Note that $P_{\mathcal{R}(A^T)}(K)$ is a cone because $K$ is. $P_{\mathcal{R}(A^T)}(K)$
is not always closed, but its closure $\overline{P_{\mathcal{R}(A^T)}(K)}$ is. 
\begin{lemma}[Interpretation of $v$] The infimal displacement vector $v$ of $T_2$ satisfies
\label{lem:v_interpret}
\[
v=P_{\overline{K-\{x\,|\,Ax=b\}}}(\mathbf{0})
=P_{\overline{P_{\mathcal{R}(A^T)}(K)}-x_0}(\mathbf{0})=P_{\overline{P_{\mathcal{R}(A^T)}(K)}}(x_0)-x_0,
\]
where $x_0$ is given in \eqref{defx0} and $K$ is any nonempty set.
\end{lemma}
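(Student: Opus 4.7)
The plan is to establish the two equalities by translating the set $\{x\mid Ax=b\}$ to $x_0+\mathcal{N}(A)$, decomposing along the orthogonal splitting $\mathbb{R}^n=\mathcal{R}(A^T)\oplus\mathcal{N}(A)$, and then reducing the distance computation to a one-dimensional projection using the Pythagorean identity. The first equality is Theorem~\ref{thm:bestapprox}, so only the middle and right expressions need to be matched to $v=P_{\overline{K-\{x\mid Ax=b\}}}(\mathbf{0})$.

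First I would write $\{x\mid Ax=b\}=x_0+\mathcal{N}(A)$, so that $K-\{x\mid Ax=b\}=K-\mathcal{N}(A)-x_0$. The key algebraic step is to show
\[
K-\mathcal{N}(A)=P_{\mathcal{R}(A^T)}(K)+\mathcal{N}(A).
\]
This follows by decomposing each $x\in K$ as $x=P_{\mathcal{R}(A^T)}(x)+P_{\mathcal{N}(A)}(x)$: any element $x-n$ equals $P_{\mathcal{R}(A^T)}(x)+(P_{\mathcal{N}(A)}(x)-n)$, and as $n$ ranges over $\mathcal{N}(A)$ the second summand also ranges over $\mathcal{N}(A)$. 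Hence $K-\{x\mid Ax=b\}=P_{\mathcal{R}(A^T)}(K)+\mathcal{N}(A)-x_0$. This step uses nothing about $K$ beyond being a nonempty subset of $\mathbb{R}^n$, matching the stated generality.

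Next, since $x_0=A^T(AA^T)^{-1}b\in\mathcal{R}(A^T)$ and $P_{\mathcal{R}(A^T)}(K)\subseteq\mathcal{R}(A^T)$, for any $c\in P_{\mathcal{R}(A^T)}(K)$ and $n\in\mathcal{N}(A)$ the Pythagorean identity gives
\[
\|c+n-x_0\|^2=\|c-x_0\|^2+\|n\|^2.
\]
Infimizing first over $n$ forces $n=0$, so
\[
d\!\left(\mathbf{0},\, K-\{x\mid Ax=b\}\right)=\inf_{c\in P_{\mathcal{R}(A^T)}(K)}\|c-x_0\|=d\!\left(x_0,\,\overline{P_{\mathcal{R}(A^T)}(K)}\right),
\]
and the unique minimizer of $\|y-\mathbf{0}\|$ over $y\in\overline{K-\{x\mid Ax=b\}}$ is realized by $c^\star-x_0$ where $c^\star=P_{\overline{P_{\mathcal{R}(A^T)}(K)}}(x_0)$. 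This identifies $v=P_{\overline{P_{\mathcal{R}(A^T)}(K)}}(x_0)-x_0$, which is the third expression.

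The second expression follows from the routine translation identity $P_{C-x_0}(\mathbf{0})=P_C(x_0)-x_0$ applied to $C=\overline{P_{\mathcal{R}(A^T)}(K)}$, together with the observation that $\overline{P_{\mathcal{R}(A^T)}(K)}-x_0=\overline{P_{\mathcal{R}(A^T)}(K)-x_0}$ since translation commutes with closure. The main delicate point, and the only place one must be careful, is that $P_{\mathcal{R}(A^T)}(K)$ is not in general closed, so the closure has to be carried through every step; fortunately distances and infima are unchanged by taking closures, and the Pythagorean reduction lets us replace a projection onto a set of the form ``cylinder in $\mathcal{R}(A^T)\oplus\mathcal{N}(A)$'' with a projection onto its base, so the $\mathcal{N}(A)$ factor never needs its closure argued separately.
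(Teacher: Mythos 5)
Your proposal is correct and follows essentially the same route as the paper: write $\{x\,|\,Ax=b\}=x_0+\mathcal{N}(A)$, use $K+\mathcal{N}(A)=P_{\mathcal{R}(A^T)}(K)+\mathcal{N}(A)$, exploit orthogonality of $\mathcal{R}(A^T)$ and $\mathcal{N}(A)$ together with $x_0\in\mathcal{R}(A^T)$, and finish with the translation identity $P_{C-x_0}(\mathbf{0})=P_C(x_0)-x_0$. The only (harmless) difference is that where the paper asserts the closure splitting $\overline{P_{\mathcal{R}(A^T)}(K)+\mathcal{N}(A)}=\overline{P_{\mathcal{R}(A^T)}(K)}+\mathcal{N}(A)$ and then notes the projection has zero $\mathcal{N}(A)$-component, you reach the same conclusion by a Pythagorean distance argument, checking that $P_{\overline{P_{\mathcal{R}(A^T)}(K)}}(x_0)-x_0$ lies in $\overline{K-\{x\,|\,Ax=b\}}$ and attains the infimal distance.
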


Combining the discussion of Section~\ref{ss:fpe}
with Theorem~\ref{thm:bestapprox}
gives us Theorems \ref{thm:feas} and \ref{thm:sfeas}.
\begin{theorem}[Certificate of feasibility]
\label{thm:feas}
Consider the iteration $z^{k+1}=T_2(z^k)$
with any starting point $z^0\in \mathbb{R}^n$, then \begin{enumerate}
    \item \eqref{P} is feasible if and only if $z^k$ converges, in this case $x^{k+1/2}$ converges to a feasible point of \eqref{P}.
    \item \eqref{P} is infeasible if and only if  $z^k$ diverges in that $\|z^k\|\rightarrow\infty$.
\end{enumerate}
\end{theorem}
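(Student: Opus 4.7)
My plan is to deduce this theorem as a specialization of Theorem~\ref{thm:drsconv} to the operator $T_2$, which we view as the DRS operator of \eqref{P} with $c$ replaced by $\mathbf{0}$. Under this substitution, the functions in the splitting become the pure indicators $f(x) = \delta_{\{x\,|\,Ax=b\}}(x)$ and $g(x) = \delta_K(x)$, so Theorem~\ref{thm:drsconv} reduces the convergence behaviour of $\{z^k\}$ to whether some $x$ satisfies $\mathbf{0} \in \partial f(x) + \partial g(x)$.

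The central observation I will use is that, because both $f$ and $g$ are indicator functions, this subdifferential inclusion is equivalent to feasibility of \eqref{P}. For one direction, if $x \in \{x\,|\,Ax=b\} \cap K$ then $\mathbf{0} \in \mathcal{R}(A^T) = \partial f(x)$ and $\mathbf{0} \in \partial g(x)$ (since the zero vector lies in the normal cone of any convex set at any of its points), so $\mathbf{0} = \mathbf{0} + \mathbf{0} \in \partial f(x) + \partial g(x)$. Conversely, $\partial f(x) = \emptyset$ whenever $Ax \neq b$ and $\partial g(x) = \emptyset$ whenever $x \notin K$, so the sum is empty unless $x$ lies in both sets.

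With this equivalence, both parts follow quickly. For part 1, if \eqref{P} is feasible, Theorem~\ref{thm:drsconv} yields $z^k \to z^\star$ together with $x^{k+1/2} \to x^\star := \Prox_{\gamma g}(z^\star) = P_K(z^\star)$ satisfying $\mathbf{0} \in \partial f(x^\star) + \partial g(x^\star)$; the equivalence above then forces $x^\star \in K \cap \{x\,|\,Ax=b\}$, so the limit is feasible. For part 2, if \eqref{P} is infeasible, no $x$ satisfies the subdifferential inclusion, and Theorem~\ref{thm:drsconv} gives $\|z^k\|\to\infty$. The two converse implications in parts 1 and 2 are then automatic from the dichotomy in Theorem~\ref{thm:drsconv}: the sequence $\{z^k\}$ must do exactly one of converging or diverging unboundedly, so convergence rules out infeasibility and unbounded divergence rules out feasibility.

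I do not anticipate a genuine obstacle here. The only thing that requires care is the indicator-function argument identifying feasibility with the existence of an $x$ satisfying $\mathbf{0} \in \partial f(x) + \partial g(x)$, and that is essentially a one-line observation about $\partial \delta_C$ for the affine set and the cone. Everything else is a transcription of Theorem~\ref{thm:drsconv} into the present notation, with the continuity of $P_K$ supplying the convergence of $x^{k+1/2}$.
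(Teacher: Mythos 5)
Your proposal is correct and follows essentially the same route as the paper: the paper obtains Theorem~\ref{thm:feas} by viewing $T_2$ as the DRS operator of \eqref{P} with $c=\mathbf{0}$ (so $f=\delta_{\{x\,|\,Ax=b\}}$, $g=\delta_K$) and invoking the convergence/divergence dichotomy of Theorem~\ref{thm:drsconv}, exactly as you do. Your explicit remark that $\mathbf{0}\in\partial f(x)+\partial g(x)$ holds precisely for feasible $x$ (since no constraint qualification is needed for indicator functions) is the same observation the paper leaves implicit.
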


\begin{theorem}[Certificate of strong infeasibility]
\label{thm:sfeas}
Consider the iteration $z^{k+1}=T_2(z^k)$
with any starting point $z^0$, we have $z^{k}-z^{k+1}\rightarrow v$ and 
\begin{enumerate}
    \item \eqref{P} is strongly infeasible if and only if $v\neq\mathbf{0}$.
    \item \eqref{P} is weakly infeasible or feasible if and only if $v=\mathbf{0}$.
\end{enumerate}
\end{theorem}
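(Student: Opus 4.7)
The plan is to assemble the statement directly from the results already proved in Sections~\ref{ss:drsconv} and \ref{ss:fpe}, together with Theorem~\ref{thm:bestapprox}. The three facts I need are that the iteration $z^{k+1}=T_2(z^k)$ has displacements converging to the infimal displacement vector, that this vector is the best approximation from the origin to $\overline{K-\{x\,|\,Ax=b\}}$, and that the norm of this vector coincides with the set distance $d(K,\{x\,|\,Ax=b\})$. Once these are in place the dichotomy $v=\mathbf{0}$ versus $v\ne\mathbf{0}$ corresponds exactly to the definitions of weak/strong infeasibility.

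First I would verify that $T_2$ is firmly nonexpansive (it is the composition $\tilde T$ of two reflections, averaged with the identity, plus a constant translation, so FNE is inherited from $\tilde T$). Applying Lemma~\ref{lem:infd} (or equivalently Theorem~\ref{thm:fperate}) to $T_2$ then yields $z^k-z^{k+1}\to v$ where $v=P_{\overline{\ran(I-T_2)}}(\mathbf{0})$, which by Theorem~\ref{thm:bestapprox} equals $P_{\overline{K-\{x\,|\,Ax=b\}}}(\mathbf{0})$. This takes care of the convergence claim that precedes the two enumerated items.

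Next I would identify $\|v\|$ with $d(K,\{x\,|\,Ax=b\})$. By definition of projection,
\[
\|v\|=d\!\left(\mathbf{0},\,\overline{K-\{x\,|\,Ax=b\}}\right)=\inf\{\|u\|\,:\,u\in \overline{K-\{x\,|\,Ax=b\}}\}.
\]
Since the infimum of the continuous function $\|\cdot\|$ over a set equals its infimum over the closure, the right-hand side coincides with $\inf\{\|u-w\|\,:\,u\in K,\,w\in\{x\,|\,Ax=b\}\}=d(K,\{x\,|\,Ax=b\})$. This chain is the only real step of the proof and is where I expect any subtle issue to arise, though it amounts to the standard fact that distance to a set equals distance to its closure.

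Finally I would feed this equality into the definitions from Section~\ref{Sec.  2.1}. Strong infeasibility is $d(K,\{x\,|\,Ax=b\})>0$, which is equivalent to $\|v\|>0$, i.e.\ $v\ne\mathbf{0}$; this proves (1). The complementary case---feasibility or weak infeasibility---is precisely $d(K,\{x\,|\,Ax=b\})=0$, which is equivalent to $\|v\|=0$, i.e.\ $v=\mathbf{0}$; this proves (2). Since no new analytic content beyond the three cited ingredients is needed, the proof is essentially a short assembly argument and the ``main obstacle,'' such as it is, lies only in the closure step above.
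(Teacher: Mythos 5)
Your proposal is correct and follows exactly the route the paper intends: the paper gives no separate proof of this theorem, merely stating that it follows by combining the discussion of Section~\ref{ss:fpe} (Lemma~\ref{lem:infd}/Theorem~\ref{thm:fperate} applied to the FNE operator $T_2$) with Theorem~\ref{thm:bestapprox}, and your assembly---including the identification $\|v\|=d(K,\{x\,|\,Ax=b\})$ via distance-to-closure and the observation that $d>0$ forces disjointness, hence infeasibility---is precisely the missing glue. No gaps.
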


When \eqref{P} is strongly infeasible,  we can obtain a separating hyperplane from $v$.
\begin{theorem}[Separating hyperplane]
\label{thm:hyperplane}
Consider the iteration $z^{k+1}=T_2(z^k)$
with any starting point $z^0$, we have $z^{k}-z^{k+1}\rightarrow v$, 
\eqref{P} is strongly infeasible if and only if $v\ne \mathbf{0}$,
and the hyperplane
\[
\{x\,|\,h^Tx=\beta\},
\]
where $h=-v\in K^*\cap \mathcal{R}(A^T)$ and $\beta =-(v^Tx_0)/2>0$,
strictly separates $K$ and $\{x\,|\,Ax=b\}$.
More precisely,
for any $y_1\in K$ and $y_2\in \{x\,|\,Ax=b\}$
we have
\[
h^Ty_1<\beta <h^Ty_2.
\]

\end{theorem}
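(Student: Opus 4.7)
The plan is to deduce all four assertions of the theorem---$h\in\mathcal{R}(A^T)$, the dual-cone membership, $\beta>0$, and the strict separating inequalities---directly from the projection characterization of $v$ established in Theorem~\ref{thm:bestapprox} and Lemma~\ref{lem:v_interpret}. Setting $C=\overline{K-\{x\,|\,Ax=b\}}$, Theorem~\ref{thm:bestapprox} gives $v=P_C(\mathbf{0})$, and the projection inequality $(\mathbf{0}-v)^T(w-v)\le 0$ rearranges to
\[
v^T w \;\ge\; \|v\|^2 \qquad \forall\, w\in C.
\]
Strong infeasibility in turn guarantees $v\neq\mathbf{0}$ via Theorem~\ref{thm:sfeas}.

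For $h\in\mathcal{R}(A^T)$, I would invoke the second formula in Lemma~\ref{lem:v_interpret}, namely $v=P_{\overline{P_{\mathcal{R}(A^T)}(K)}}(x_0)-x_0$. Both $x_0=A^T(AA^T)^{-1}b$ and the projection point belong to $\mathcal{R}(A^T)$, since $\overline{P_{\mathcal{R}(A^T)}(K)}\subseteq \mathcal{R}(A^T)$ by closedness of the finite-dimensional subspace; hence $v$, and therefore $h=-v$, lies in $\mathcal{R}(A^T)$.

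For the dual-cone membership, fix $y_1\in K$ and apply the projection inequality with $w=\lambda y_1-x_0\in C$ for $\lambda>0$; this is valid because $\lambda y_1\in K$ by the cone property and $x_0\in\{x\,|\,Ax=b\}$. The resulting inequality $\lambda v^T y_1 \ge \|v\|^2+v^T x_0$ forces $v^T y_1\ge 0$ as $\lambda\to\infty$, delivering $v\in K^*$ (and hence the claimed cone membership for $h$). For $\beta>0$, specialize to $w=\mathbf{0}-x_0\in C$ (using $\mathbf{0}\in K$): this yields $v^T x_0\le -\|v\|^2<0$, and therefore $\beta=-(v^T x_0)/2\ge \|v\|^2/2>0$.

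For the separating inequalities, combine what we have with the orthogonal decomposition $\mathbb{R}^n=\mathcal{R}(A^T)\oplus\mathcal{N}(A)$. For $y_1\in K$, $h^T y_1=-v^T y_1\le 0<\beta$ by the previous step. For $y_2\in\{x\,|\,Ax=b\}$, write $y_2=x_0+d$ with $d\in\mathcal{N}(A)$; since $v\in\mathcal{R}(A^T)=\mathcal{N}(A)^\perp$, $v^T d=0$, so $v^T y_2=v^T x_0=-2\beta$, whence $h^T y_2=2\beta>\beta$. The main subtlety I anticipate is purely bookkeeping on signs in the dual-cone claim---the projection inequality naturally delivers $v^T y_1\ge 0$, and one must be consistent about the orientation of $h=-v$ when translating this into a statement about $K^*$; everything else is a direct consequence of the projection inequality and the subspace orthogonality.
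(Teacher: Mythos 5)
Your proof is correct, but it takes a genuinely different route from the paper. The paper's proof first pins down $v$ in closed form: using the Moreau-type decomposition $I=P_{K^*\cap \mathcal{R}(A^T)}+P_{-(K^*\cap \mathcal{R}(A^T))^*}$ together with the polar identity $(K^*\cap \mathcal{R}(A^T))^*=\overline{K+\mathcal{N}(A)}$, it shows $v=P_{\overline{K+\mathcal{N}(A)}}(x_0)-x_0=P_{K^*\cap \mathcal{R}(A^T)}(-x_0)$, so the membership $v\in K^*\cap\mathcal{R}(A^T)$ is immediate, and the key inequality $-v^Tx_0\ge\|v\|^2>0$ (hence $\beta>0$) follows from firm nonexpansiveness of the projection; the separation then reads off exactly as in your last step. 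You instead work directly from the variational characterization of $v=P_C(\mathbf{0})$ with $C=\overline{K-\{x\,|\,Ax=b\}}$: the obtuse-angle inequality $v^Tw\ge\|v\|^2$ for $w\in C$, applied to $w=\lambda y_1-x_0$ with $\lambda\to\infty$ gives $v\in K^*$, applied to $w=-x_0$ (using $\mathbf{0}\in K$, which holds since $K$ is a nonempty closed cone) gives $v^Tx_0\le-\|v\|^2<0$, and $v\in\mathcal{R}(A^T)$ comes from Lemma~\ref{lem:v_interpret}. Your approach is more elementary, avoiding the Moreau decomposition and the dual-cone identity; the paper's approach buys the explicit formula $v=P_{K^*\cap\mathcal{R}(A^T)}(-x_0)$, which is slightly more information but not needed for the separation itself. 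Note also that the sign bookkeeping you flag is real: what both arguments actually establish is $v\in K^*\cap\mathcal{R}(A^T)$ (equivalently $h=-v\in -K^*$), which is what the separation inequalities require; the paper's own proof uses $h\in -K^*$ as well, so the statement's phrase ``$h=-v\in K^*\cap\mathcal{R}(A^T)$'' is a sign slip in the theorem, and your orientation is the correct one.
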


Based on Theorems \ref{thm:feas}, \ref{thm:sfeas}, and
\ref{thm:hyperplane},
we can determine feasibility, weak infeasiblity, and strong infeasibility and obtain a strictly separating hyperplane
if one exists
with the iteration $z^{k+1}=T_2(z^k)$
with any starting point $z^0$.
\begin{itemize}
\item
$\lim_{k\rightarrow\infty}\|z^k\|<\infty$ if and only if
\eqref{P} is feasible.
\item
$\lim_{k\rightarrow\infty}\|z^{k}-z^{k+1}\|>0$ if and only if
\eqref{P} is strongly infeasible, and Theorem~\ref{thm:hyperplane}
provides a strictly separating hyperplane.
\item
$\lim_{k\rightarrow\infty}\|z^k\|=\infty$ and
$\lim_{k\rightarrow\infty}\|z^{k}-z^{k+1}\|=0$ if and only if
\eqref{P} is weakly infeasible.
\end{itemize}
With a finite number of iterations,
we distinguish the three cases by testing
$\|z^{k+1}-z^k\|\le \varepsilon$ and $\|z^{k}\|\ge M$
for some small $\varepsilon>0$ and large $M>0$.
By Theorem~\ref{thm:fperate}, 
we can distinguish strong infeasibility
from weak infeasibility or feasibility at a rate of $O(1/\sqrt{k})$.
However, distinguishing feasibility from weak infeasibility can be numerically difficult
as the rate of $\|z^k\|\rightarrow\infty$ can be very slow when \eqref{P} is weakly infeasible.

\begin{proof}[Proof of Lemma~\ref{lem:v_interpret}]
Remember that
by definition \eqref{defx0},
we have $x_0\in \mathcal{R}(A^T) $ and
\[
\{x\,|\,Ax=b\}=x_0+\mathcal{N}(A)=x_0-\mathcal{N}(A).
\]
Also note that for any $y\in \mathbb{R}^n$, we have
\[
y+\mathcal{N}(A)=P_{\mathcal{R}(A^T)}(y)+\mathcal{N}(A).
\]
So
\[
K-\{x\,|\,Ax=b\}
=K+\mathcal{N}(A)-x_0=
P_{\mathcal{R}(A^T)}(K)-x_0+ \mathcal{N}(A),
\]
and
\begin{equation}
\overline{K-\{x\,|\,Ax=b\}}
=
\overline{P_{\mathcal{R}(A^T)}(K)+ \mathcal{N}(A)}-x_0
=
\overline{P_{\mathcal{R}(A^T)}(K)}-x_0+ \mathcal{N}(A).
\end{equation}
Since $x_0\in \mathcal{R}(A^T)$,
we have
$\overline{P_{\mathcal{R}(A^T)}(K)}-x_0\subseteq \mathcal{R}(A^T)$,
and, 
in particular, $\overline{P_{\mathcal{R}(A^T)}(K)}-x_0$ is orthogonal to the subspace $\mathcal{N}(A)$.
Recall
\[
v=P_{\overline{P_{\mathcal{R}(A^T)}(K)}-x_0+\mathcal{N}(A)}(\mathbf{0}).
\]
So 
$v\in \overline{P_{\mathcal{R}(A^T)}(K)}-x_0\subseteq \mathcal{R}(A^T)$
and
\[
v=P_{\overline{P_{\mathcal{R}(A^T)}(K)}-x_0}(\mathbf{0}).
\]Finally,
\[
v=
\argmin_{x\in \overline{P_{\mathcal{R}(A^T)}(K)}-x_0}
\left\{\|x\|_2^2\right\}
=
\argmin_{y\in \overline{P_{\mathcal{R}(A^T)}(K)}}
\left\{\|y-x_0\|_2^2\right\}-x_0=P_{\overline{P_{\mathcal{R}(A^T)}(K)}}(x_0)-x_0
\]
\end{proof}

\begin{proof}[Proof of Theorem~\ref{thm:hyperplane}]
Note that
\begin{align*}
v&=P_{\overline{K-\{x\,|\,Ax=b\}}}(\mathbf{0})
=P_{\overline{K+\mathcal{N}(A)-x_0}}(\mathbf{0})=P_{\overline{K+\mathcal{N}(A)}}(x_0)-x_0
\end{align*}
Using 
$I=P_{K^*\cap \mathcal{R}(A^T)}+P_{-(K^*\cap \mathcal{R}(A^T))^*}$ and $(K^*\cap \mathcal{R}(A^T))^*=\overline{K+\mathcal{N}(A)}$ \cite{BauschkeCombettes2011_convex}, we have
\[
v=P_{\overline{K+\mathcal{N}(A)}}(x_0)-x_0=-P_{-(K^*\cap \mathcal{R}(A^T))}(x_0)=P_{K^*\cap \mathcal{R}(A^T)}(-x_0).
\]
Since the projection operator is FNE, we have 
\[
-v^Tx_0=(v-\mathbf{0})^T(-x_0-\mathbf{0})\ge 
\|
P_{K^*\cap\mathcal{R}(A^T)}(-x_0)
\|^2=\|v\|^2>0
\]
and therefore $v^Tx_0<0, \beta=-v^Tx_0/2>0.$

So for any $y_1\in K$ and $y_2\in \{x\,|\,Ax=b\}$, we have
\[
h^Ty_1=-v^Ty_1\le 0<-(v^Tx_0)/2=\beta <-v^Tx_0 =h^Ty_2,
\]
where we have used $h=-v=-P_{K^*\cap \mathcal{R}(A^T)}(-x_0)\in -K^*$ in the first inequality.
\end{proof}

\subsection{Modifying affine constraints to achieve strong feasibility}
Strongly feasible problems are, loosely speaking, the good cases
that are easier to solve, compared
to weakly feasible or infeasible problems. Given a problem that is not strongly feasible,
how to minimally modify the problem to achieve strong feasibility
is often useful to know.

The limit $z^k-z^{k+1}\rightarrow v$ informs us of how to do this.
When $d(K,\{x\,|\,Ax=b\})=\|v\|>0$, the constraint
$K\cap \{x\,|\,A(x-y)=b\}$ is infeasible for any $y$ such that $\|y\|<\|v\|$.
In general, the constraint $K\cap \{x\,|\,A(x-v)=b\}$ can be feasible or weakly infeasible,
but is not strongly feasible.
The constraint $K\cap \{x\,|\,A(x-v-d)=b\}$
is strongly feasible for an arbitrarily small $d\in\relint K$.
In other words, 
$K\cap \{x\,|\,A(x-v-d)=b\}$
achieves strong feasibility
with the minimal modification (measured by the Euclidean norm $\|\cdot\|$)
to the original constraint $K\cap \{x\,|\,Ax=b\}$.

\begin{theorem}[Achieving strong feasibility]
\label{thm:achieve_feas}
Let 
$v=P_{\overline{K-\{x\,|\,Ax=b\}}}(\mathbf{0})$,
and let $d$ be any vector satisfying $d\in \relint K$.
Then the constraint
$K\cap \{x\,|\,A(x-v-d)=b\}$
is strongly feasible, i.e.,
there is an $x$ such that
$x\in
\relint K\cap \{x\,|\,A(x-v-d)=b\}$.
\end{theorem}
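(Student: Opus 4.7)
The plan is to reduce the desired strong feasibility to a single relative-interior statement in the image space $\mathbb{R}^m$. Concretely, an $x \in \relint K$ with $A(x - v - d) = b$ is the same as $x \in \relint K$ with $Ax = b + Av + Ad$, and by the standard formula $A(\relint K) = \relint(AK)$ for a convex set $K$ under a linear map (Rockafellar, Theorem~6.6), this reduces to showing
\[
b + Av + Ad \in \relint(AK).
\]
Note that $AK \subseteq \mathbb{R}^m$ is a convex cone: convex because $K$ is convex and $A$ is linear, and a cone because $K$ is a cone.

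Next I would unpack $v$. Since $v = P_{\overline{K - \{x \mid Ax = b\}}}(\mathbf{0})$ lies in this closure, there exist sequences $k_n \in K$ and $y_n$ with $Ay_n = b$ such that $k_n - y_n \to v$. Applying $A$ to the identity $k_n = (k_n - y_n) + y_n$ gives $Ak_n \to Av + b$, so
\[
b + Av \in \overline{AK}.
\]
Separately, from $d \in \relint K$ and the same linear-image formula, $Ad \in \relint(AK)$.

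The key step is the following cone property: for any convex cone $C \subseteq \mathbb{R}^m$,
\[
\overline{C} + \relint C \subseteq \relint C.
\]
Indeed, for $p \in \overline{C}$ and $q \in \relint C$, the line-segment principle gives $(p + q)/2 \in \relint C$; because $C$ is a cone, $\relint C$ is invariant under positive scaling (from $C = \lambda C$ for $\lambda > 0$), so multiplying by $2$ gives $p + q \in \relint C$. Applying this with $C = AK$, $p = b + Av \in \overline{AK}$, and $q = Ad \in \relint(AK)$ yields $b + Av + Ad \in \relint(AK) = A(\relint K)$, and pulling back through $A$ produces the desired $x \in \relint K$.

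I do not expect any real obstacle here beyond citing the right standard facts: the linear-image identity $A(\relint K) = \relint(AK)$, and the line-segment principle together with positive homogeneity of $\relint C$ for a convex cone. The one subtlety to track is that $AK$ need not be closed, so the argument must use $\overline{AK}$ when accommodating $b + Av$, while still working with $\relint(AK)$ for the interior ingredient; this is harmless since $\relint(\overline{AK}) = \relint(AK)$.
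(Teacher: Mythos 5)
Your proof is correct and follows essentially the same route as the paper's: both hinge on the linear-image identity for relative interiors (Rockafellar, Theorem~6.6) together with the fact that adding a relative-interior point of a convex cone to a point of its closure lands in the relative interior, proved exactly as you do via the line-segment principle and positive homogeneity. The only cosmetic difference is that you push everything forward by $A$ into $\mathbb{R}^m$ and read $b+Av\in\overline{AK}$ directly off the definition of $v$, whereas the paper stays in $\mathbb{R}^n$, working with the orthogonal projection onto $\mathcal{R}(A^T)$ and its interpretation lemma for $v$.
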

 \begin{proof}[Proof of Theorem~\ref{thm:achieve_feas}]

By Lemma~\ref{lem:v_interpret}
we have
\begin{align}\label{eq:vx0}
 v+x_0\in \overline{P_{\mathcal{R}(A^T)}(K)}.
 \end{align}
Because $P_{\mathcal{R}(A^T)}$ is a linear transformation,
by Lemma~\ref{lem:linrelint} below
\[
P_{\mathcal{R}(A^T)}(\relint K)
=\relint 
P_{\mathcal{R}(A^T)}(K).
\]
Since $d\in \relint K$,
\begin{align}\label{eq:Pd}
P_{\mathcal{R}(A^T)}(d)\in 
P_{\mathcal{R}(A^T)}(\relint K)
=\relint 
P_{\mathcal{R}(A^T)}(K).
\end{align}
Applying Lemma~\ref{lem:coneint} to \eqref{eq:vx0} and \eqref{eq:Pd}, we have
\[
v+x_0+
P_{\mathcal{R}(A^T)}(d)\in
\relint {P_{\mathcal{R}(A^T)}(K)}
=
P_{\mathcal{R}(A^T)}(\relint K).
 \]
 Finally we have
\[
0\in  P_{\mathcal{R}(A^T)}(\relint K)-x_0-v-d+\mathcal{N}(A)=
\relint K-\{x\,|\,A(x-v-d)=b\}.
\]
\end{proof}

\begin{lemma}[Theorem~6.6 of \cite{Rockafellar1970_convex}]
\label{lem:linrelint}
If $A(\cdot)$ is a linear transformation and $C$ is a 
convex set,
then $A(\relint C)=\relint A(C)$.
\end{lemma}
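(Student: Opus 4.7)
The plan is to prove both inclusions $A(\relint C) \subseteq \relint A(C)$ and $\relint A(C) \subseteq A(\relint C)$ using the standard characterization of the relative interior: a point $x \in C$ lies in $\relint C$ if and only if for every $y \in C$, one can extend the segment from $y$ through $x$ slightly past $x$ while staying in $C$, i.e., there exists $\epsilon > 0$ with $x + \epsilon(x - y) \in C$. The linearity of $A$ is precisely what allows this ``extension'' property to transfer back and forth between $C$ and $A(C)$.

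For the first inclusion, I would take $x \in \relint C$ and any $y' \in A(C)$, write $y' = A(y)$ with $y \in C$, and use the extension property in $C$ to find $\epsilon > 0$ such that $x + \epsilon(x - y) \in C$. Applying $A$ and exploiting linearity, $A(x) + \epsilon(A(x) - y') \in A(C)$, so $A(x) \in \relint A(C)$. For the reverse inclusion, pick any $y' \in \relint A(C)$ and any $x_0 \in \relint C$ (which is nonempty since $C$ is convex, or trivially if $C$ is empty). By the first inclusion, $A(x_0) \in \relint A(C)$, so we can extend the segment from $A(x_0)$ through $y'$ a bit further: there exist $\epsilon > 0$ and $x_1 \in C$ with $A(x_1) = y' + \epsilon(y' - A(x_0))$. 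Solving for $y'$ expresses it as a proper convex combination $y' = \tfrac{1}{1+\epsilon} A(x_1) + \tfrac{\epsilon}{1+\epsilon} A(x_0) = A\!\left(\tfrac{1}{1+\epsilon} x_1 + \tfrac{\epsilon}{1+\epsilon} x_0\right)$, and the point $x := \tfrac{1}{1+\epsilon} x_1 + \tfrac{\epsilon}{1+\epsilon} x_0$ lies in $\relint C$ because mixing a relative interior point with any point of $C$ in a strictly positive ratio stays in $\relint C$.

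The main obstacle is the reverse inclusion, and it genuinely requires the standard line-segment principle that $(1-t)x_0 + t x_1 \in \relint C$ when $x_0 \in \relint C$, $x_1 \in C$, and $t \in [0,1)$. This principle in turn relies on the fact that $\relint C$ is nonempty for nonempty convex $C$ in finite dimensions, which is where convexity (and finite dimensionality) are essential. If $C = \emptyset$ the lemma is vacuous, so up to this trivial edge case the argument above completes the proof.
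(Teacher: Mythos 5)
Your proof is correct. Note that the paper itself gives no proof of this lemma: it is imported verbatim as Theorem~6.6 of \cite{Rockafellar1970_convex}, so there is no internal argument to compare against. What you wrote is essentially the standard textbook proof: the inclusion $A(\relint C)\subseteq \relint A(C)$ follows by pushing the prolongation characterization of the relative interior (Rockafellar, Theorem~6.4) through the linear map, and the reverse inclusion uses the same characterization inside the convex set $A(C)$ together with the line segment principle in $C$ and the nonemptiness of $\relint C$ for nonempty convex sets in finite dimensions---all of which you correctly flag as the ingredients where convexity and finite dimension enter. For what it is worth, Rockafellar's own proof handles the harder direction $\relint A(C)\subseteq A(\relint C)$ with exactly your prolongation argument, while obtaining the easier direction via a closure argument (showing $A(C)$ and $A(\relint C)$ have the same closure); your choice to do both directions directly with the prolongation criterion is equally valid and somewhat more self-contained.
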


\begin{lemma}\label{lem:coneint}
Let $K$ be a convex cone.
If $x\in K$ and $y\in \relint K$, then $x+y\in \relint K$.
\end{lemma}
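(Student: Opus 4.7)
The plan is to combine two standard facts about relative interiors and cones: the line-segment principle, and the fact that the relative interior of a convex cone is itself a cone.

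First, I would invoke the line-segment principle (Theorem~6.1 of \cite{Rockafellar1970_convex}): for any convex set $C$, if $a\in\mathrm{cl}\,C$ and $b\in\relint C$, then the half-open segment $(1-\lambda)a+\lambda b$ lies in $\relint C$ for every $\lambda\in(0,1]$. Applying this to $C=K$ with $a=x$ and $b=y$, and choosing $\lambda=1/2$, yields $\tfrac{1}{2}(x+y)\in\relint K$.

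Second, I would show $\relint K$ is closed under multiplication by positive scalars. The key observation is that $L=\mathrm{aff}(K)$ is in fact a linear subspace. Indeed, picking any $z\in K$, the ray $\{\mu z:\mu>0\}$ lies in $K\subseteq L$, and since $L$ is a closed affine set its closure contains $0$, so $L$ is a subspace. Consequently $\mu L=L$ for every $\mu>0$. Now if $w\in\relint K$, there exists $\varepsilon>0$ with $B(w,\varepsilon)\cap L\subseteq K$. For any $\mu>0$,
\[
B(\mu w,\mu\varepsilon)\cap L=\mu\bigl(B(w,\varepsilon)\cap L\bigr)\subseteq \mu K=K,
\]
so $\mu w\in\relint K$.

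Combining the two steps, take $w=\tfrac{1}{2}(x+y)\in\relint K$ and $\mu=2$ to conclude $x+y=2w\in\relint K$, completing the proof. I do not foresee a real obstacle here: both ingredients are elementary consequences of the definition of relative interior, and the only mildly subtle point is verifying that $\mathrm{aff}(K)$ is a linear subspace, which follows immediately from the cone property $K=\lambda K$ for $\lambda>0$.
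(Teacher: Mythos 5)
Your proof is correct and follows essentially the same route as the paper: the line-segment principle gives $\tfrac12(x+y)\in\relint K$, and positive-scaling invariance of $\relint K$ (which the paper simply asserts from $K$ being a cone, and which you verify in detail via $\mathrm{aff}(K)$ being a subspace) then yields $x+y\in\relint K$.
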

\begin{proof}
Since $K$ is a convex set and $y\in \relint K$, 
we have $(1/2)x+(1/2)y\in \relint K$.
Since $K$ is a cone,  $(1/2)(x+y)\in \relint K$ implies $x+y\in \relint K$.
\end{proof}

\subsection{Improving direction}
\label{ss:impdir}
\eqref{P} has an improving direction if and only if the dual problem \eqref{D} is strongly infeasible:
\[
0<
d(0,K^\star+\mathcal{R}(A^T)-c)=
d(\{(y,s)\,|\,A^Ty+s=c\},\{(y,s)\,|\,s\in K^*=c\}).
\]

\begin{theorem}[Certificate of improving direction]
\label{thm:impdir}
Exactly one of the following is true:
 \begin{enumerate}
 \item
 \eqref{P} has an improving direction, \eqref{D} is strongly infeasible, 
 and $P_{\mathcal{N}(A)\cap K}(-c)\ne \mathbf{0}$ is an improving direction.
 \item
  \eqref{P} has no improving direction, \eqref{D} is feasible or weakly infeasible,
  and $P_{\mathcal{N}(A)\cap K}(-c)=\mathbf{0}$.
 \end{enumerate} 
Furthermore,
\[
P_{\mathcal{N}(A)\cap K}(-c)=
P_{\overline{K^*+\mathcal{R}(A^T)-c}}(\mathbf{0}).
\]
\end{theorem}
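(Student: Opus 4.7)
The plan is to first establish the closed-form formula $P_{\mathcal{N}(A)\cap K}(-c) = P_{\overline{K^*+\mathcal{R}(A^T)-c}}(\mathbf{0})$, since once it is in hand the dichotomy follows by reading the norm on both sides. The main tool is Moreau's decomposition applied to the closed convex cone $C = \mathcal{N}(A)\cap K$. Its polar cone is $C^\circ = -C^* = -\overline{K^*+\mathcal{R}(A^T)}$, using the subspace identity $\mathcal{N}(A)^* = \mathcal{R}(A^T)$ together with the standard cone intersection formula $(C_1\cap C_2)^* = \overline{C_1^*+C_2^*}$ for closed convex cones. Moreau applied at $x=-c$ yields $-c = P_C(-c) + P_{C^\circ}(-c)$, and combining with $P_{-S}(x) = -P_S(-x)$ gives $P_C(-c) = P_{\overline{K^*+\mathcal{R}(A^T)}}(c) - c$. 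Since translation commutes with closure and with projection onto a translate, $P_{\overline{K^*+\mathcal{R}(A^T)}}(c) - c = P_{\overline{K^*+\mathcal{R}(A^T)}-c}(\mathbf{0}) = P_{\overline{K^*+\mathcal{R}(A^T)-c}}(\mathbf{0})$, which is the stated formula.

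Next I would characterize when $u^\star := P_{\mathcal{N}(A)\cap K}(-c)$ is nonzero. Because $tu^\star \in \mathcal{N}(A)\cap K$ for every $t\ge 0$, the variational characterization of projection onto a convex set gives $(tu^\star - u^\star)^T(-c - u^\star) \le 0$ for all $t\ge 0$, forcing $u^{\star T}(-c-u^\star)=0$, i.e., $c^T u^\star = -\|u^\star\|^2$. Hence if $u^\star \neq \mathbf{0}$ then $c^T u^\star < 0$ and $u^\star$ is itself an improving direction. Conversely, if some $u\in \mathcal{N}(A)\cap K$ satisfies $c^T u < 0$, then for sufficiently small $t>0$
\[
\|tu + c\|^2 = t^2\|u\|^2 + 2tc^T u + \|c\|^2 < \|c\|^2 = \|\mathbf{0}+c\|^2,
\]
so $\mathbf{0}$ cannot be the nearest point of $\mathcal{N}(A)\cap K$ to $-c$, giving $u^\star \neq \mathbf{0}$. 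This proves that $u^\star \neq \mathbf{0}$ if and only if \eqref{P} has an improving direction.

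Finally, I would tie this to the status of \eqref{D}. Its feasible set is $\{(y,s)\,|\,A^Ty+s=c,\,s\in K^*\}$, which after projecting out $y$ is nonempty iff $c\in K^*+\mathcal{R}(A^T)$, and one has
\[
d\bigl(\{(y,s)\,|\,A^Ty+s=c\},\,\{(y,s)\,|\,s\in K^*\}\bigr) = d\bigl(c,\,K^*+\mathcal{R}(A^T)\bigr) = \bigl\|P_{\overline{K^*+\mathcal{R}(A^T)-c}}(\mathbf{0})\bigr\|.
\]
By the formula just proved this equals $\|u^\star\|$. Therefore \eqref{D} is strongly infeasible iff $u^\star\neq\mathbf{0}$ iff \eqref{P} has an improving direction, while $u^\star=\mathbf{0}$ corresponds exactly to $c\in \overline{K^*+\mathcal{R}(A^T)}$, which separates into feasibility ($c\in K^*+\mathcal{R}(A^T)$) or weak infeasibility ($c\in\overline{K^*+\mathcal{R}(A^T)}\setminus (K^*+\mathcal{R}(A^T))$) of \eqref{D}.

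The main obstacle I expect is handling the closures carefully: justifying $(\mathcal{N}(A)\cap K)^* = \overline{K^*+\mathcal{R}(A^T)}$ for a general closed convex cone $K$ (where the sum need not be closed), invoking Moreau's decomposition in the polar-cone form it requires, and confirming that $\overline{K^*+\mathcal{R}(A^T)-c} = \overline{K^*+\mathcal{R}(A^T)}-c$ so that the final projection lands exactly on the set named in the theorem. Once these are in place, the dichotomy and the explicit formula follow as above.
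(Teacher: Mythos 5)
Your proposal is correct and follows essentially the same route as the paper: the identity $(\mathcal{N}(A)\cap K)^*=\overline{K^*+\mathcal{R}(A^T)}$ plus Moreau's decomposition to get the projection formula, and a projection inequality (your variational characterization $c^Tu^\star=-\|u^\star\|^2$ versus the paper's firm-nonexpansiveness bound $\|u^\star\|^2\le -c^Tu^\star$) to show a nonzero projection is itself an improving direction. The only cosmetic differences are the order of the steps and that you spell out the link to strong/weak infeasibility of \eqref{D} via $d(c,K^*+\mathcal{R}(A^T))$, which the paper states in the discussion preceding the theorem rather than inside its proof.
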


\begin{theorem}
\label{thm:impdir2}
Consider the iteration $z^{k+1}=T_3(z^k)=\tilde{T}(z^k)-\gamma Dc$
with any starting point $z^0$ and $\gamma>0$.
If \eqref{P} has an improving direction, then 
\[
d=\lim_{k\rightarrow \infty}z^{k+1}-z^k=P_{\overline{K^*+\mathcal{R}(A^T)-c}}(\mathbf{0})\neq \mathbf{0}
\]
gives one. If \eqref{P} has no improving direction, then
\[
\lim_{k\rightarrow \infty}z^{k+1}-z^k=\mathbf{0}.
\]
\end{theorem}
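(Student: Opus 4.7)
The plan is to apply Lemma~\ref{lem:infd} to $T_3$ and identify its infimal displacement vector with (a positive multiple of) the projection in Theorem~\ref{thm:impdir}. Since $T_3$ is the DRS operator for \eqref{P} with $b$ replaced by $\mathbf{0}$, it is FNE, so Lemma~\ref{lem:infd} gives $z^k-z^{k+1}\to v:=P_{\overline{\ran(I-T_3)}}(\mathbf{0})$, and hence $z^{k+1}-z^k\to -v$. The task therefore reduces to computing $v$.

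The central computation is a usable formula for $(I-T_3)(z)$. Expanding $\tilde T=\tfrac{1}{2}(I+R_{\mathcal{N}(A)}R_K)$ and invoking the Moreau decomposition $z=P_K(z)+P_{-K^*}(z)$ yields, after simplification,
\[
(I-T_3)(z) = P_{\mathcal{R}(A^T)}(x)+P_{\mathcal{N}(A)}(\nu+\gamma c),
\]
where $x:=P_K(z)\in K$, $\nu:=P_{-K^*}(z)\in -K^*$, and $x\perp\nu$. Because $\mathcal{R}(A^T)\perp\mathcal{N}(A)$, the two summands are orthogonal, so $\|(I-T_3)(z)\|^2=\|P_{\mathcal{R}(A^T)}(x)\|^2+\|P_{\mathcal{N}(A)}(\nu+\gamma c)\|^2$. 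To minimize, the choice $x=\mathbf{0}$ is admissible (trivially $x\perp\nu$) and wipes out the first term, while nonzero $x$ can only enlarge the norm. Substituting $\tau:=-\nu\in K^*$ and using positive homogeneity of the cone, the remaining infimum is $\gamma^2\,d(Dc,\overline{P_{\mathcal{N}(A)}(K^*)})^2$.

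By Lemma~\ref{lem:v_interpret} applied with $K^*$ and $c+\mathcal{R}(A^T)$ in place of $K$ and $\{x\,|\,Ax=b\}$, one obtains $P_{\overline{K^*+\mathcal{R}(A^T)-c}}(\mathbf{0})=P_{\overline{P_{\mathcal{N}(A)}(K^*)}}(Dc)-Dc$, whose norm is precisely $d(Dc,\overline{P_{\mathcal{N}(A)}(K^*)})$. Tracking the direction of a minimizing sequence gives $v=-\gamma P_{\overline{K^*+\mathcal{R}(A^T)-c}}(\mathbf{0})$, so $z^{k+1}-z^k\to \gamma P_{\overline{K^*+\mathcal{R}(A^T)-c}}(\mathbf{0})$, a positive scalar multiple of the improving direction certified by Theorem~\ref{thm:impdir}. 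If \eqref{P} has an improving direction, this projection is nonzero and is itself an improving direction, so the limit is too; if \eqref{P} has no improving direction, the projection is $\mathbf{0}$ and the limit vanishes.

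The main obstacle is verifying that restricting to $x=\mathbf{0}$ attains the global infimum over the constrained set $\{(x,\nu):x\in K,\nu\in -K^*,x\perp\nu\}$; this works only because the orthogonal decomposition separates the $x$- and $\nu$-contributions additively. A secondary subtlety is the potential non-closedness of $P_{\mathcal{N}(A)}(K^*)$ and $K^*+\mathcal{R}(A^T)$, which requires careful bookkeeping with closures and the Moreau-type dual cone identities already invoked in the proof of Theorem~\ref{thm:hyperplane}.
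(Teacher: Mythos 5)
Your proof is correct, but it takes a genuinely different route from the paper's. The paper never computes $\ran(I-T_3)$ directly: using the identities $I=P_{\mathcal{N}(A)}+P_{\mathcal{R}(A^T)}$, $I=P_K+P_{-K^*}$, and $R_{\mathcal{R}(A^T)-\gamma c}(z)=R_{\mathcal{R}(A^T)}(z)-2\gamma Dc$, it rewrites $T_3=\tfrac{1}{2}(I+R_{\mathcal{R}(A^T)-\gamma c}R_{-K^*})$, i.e., it recognizes $T_3$ as the DRS operator of the feasibility problem between $\mathcal{R}(A^T)-\gamma c$ and $-K^*$, and then reads the infimal displacement vector off Theorem~3.4 of \cite{BauschkeCombettesLuke2004_finding} exactly as in Theorem~\ref{thm:bestapprox}, getting $z^k-z^{k+1}\to-\gamma P_{\overline{K^*+\mathcal{R}(A^T)-c}}(\mathbf{0})=-\gamma P_{\mathcal{N}(A)\cap K}(-c)$; this reinterpretation also delivers Theorem~\ref{thm:impdir3} in the same stroke. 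You instead derive the identity $(I-T_3)(z)=P_{\mathcal{R}(A^T)}(P_K(z))+P_{\mathcal{N}(A)}(P_{-K^*}(z)+\gamma c)$ (which is correct) and minimize the norm over Moreau pairs by hand, so your argument is self-contained, relying only on Lemma~\ref{lem:infd}, convexity of $\overline{\ran(I-T_3)}$, the reflected version of Lemma~\ref{lem:v_interpret}, and Theorem~\ref{thm:impdir}, at the cost of not obtaining the dual-feasibility statement of Theorem~\ref{thm:impdir3} along the way. Two points to tighten: (i) ``tracking the direction of a minimizing sequence'' is better replaced by noting that $u=\gamma Dc-P_{\overline{P_{\mathcal{N}(A)}(K^*)}}(\gamma Dc)$ lies in $\overline{\ran(I-T_3)}$ and has norm equal to $d(\mathbf{0},\overline{\ran(I-T_3)})$, hence equals $v$ by uniqueness of the minimal-norm element of a nonempty closed convex set; (ii) your limit carries the factor $\gamma$, i.e., $z^{k+1}-z^k\to\gamma P_{\overline{K^*+\mathcal{R}(A^T)-c}}(\mathbf{0})$, which agrees with the paper's own computation (the theorem statement silently drops the $\gamma$) and is harmless since a positive multiple of an improving direction is still an improving direction.
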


 Based on Theorem~\ref{thm:impdir} and \ref{thm:impdir2}
 we can determine whether there is an improving direction
 and find one if one exists with the iteration
 $z^{k+1}=\tilde{T}(z^k)-\gamma Dc$
 with any starting point $z^0$ and $\gamma>0$.
 \begin{itemize}
 \item
 $\lim_{k\rightarrow\infty}z^{k+1}-z^k=\mathbf{0}$ if and only if there
 is no improving direction.
 \item
 $\lim_{k\rightarrow\infty}z^{k+1}-z^k=d\ne \mathbf{0}$ if and only if $d$ is an improving direction.
 \end{itemize}
With a finite number of iterations,
we test 
$\|z^{k+1}-z^k\|\le \varepsilon$
for some small $\varepsilon>0$.
By Theorem~\ref{thm:fperate},
we can distinguish whether there is an improving direction or not 
at a rate of $O(1/\sqrt{k})$.

We need the following theorem for Section~\ref{ss:other}, it is proved similarly to \ref{thm:feas} below.
\begin{theorem}
\label{thm:impdir3}
Consider the iteration
\[
z^{k+1}=\tilde{T}(z^k)-\gamma Dc
\]
with any starting point $z^0$ and $\gamma>0$.
If \eqref{D} is feasible, then
$z^k$ converges.
If \eqref{D} is infeasible, then 
$z^k$ diverges in that $\|z^k\|\rightarrow\infty$.
\end{theorem}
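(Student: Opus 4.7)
My plan is to recognize $T_3$ as the DRS operator applied to a simpler auxiliary problem, and then to translate convergence of the iteration into the existence of a saddle point via Theorems~\ref{thm:drsconv} and~\ref{thm:0sub}. Concretely, by Proposition~1 and equation~\eqref{defx0}, the DRS operator of a conic program in standard form becomes $\tilde T(z)+x_0-\gamma Dc$; setting $b=\mathbf{0}$ makes $x_0=\mathbf{0}$, so $T_3$ is precisely the DRS operator of the modified primal problem
\[
\begin{array}{ll}
\text{minimize}& c^Tx\\
\text{subject to}& Ax=\mathbf{0}\\
                 & x\in K,
\end{array}
\tag{P$_0$}
\]
whose Lagrangian dual has constraints $A^Ty+s=c,\ s\in K^*$, i.e., exactly the feasible set of \eqref{D}.

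Next I would apply Theorem~\ref{thm:drsconv}: the iterates $z^k=T_3^{\,k}(z^0)$ converge if and only if there exists some $x^\star$ with $\mathbf{0}\in\partial f_0(x^\star)+\partial g(x^\star)$, where $f_0(x)=c^Tx+\delta_{\mathcal{N}(A)}(x)$ and $g(x)=\delta_K(x)$ are the splitting functions of (P$_0$); otherwise $\|z^k\|\to\infty$. Theorem~\ref{thm:0sub} (applied to (P$_0$) in place of \eqref{P}) then tells me that such an $x^\star$ exists if and only if (P$_0$) and its dual both attain their optimal values and the two values agree.

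It remains to check that this saddle-point condition for (P$_0$) is equivalent to feasibility of \eqref{D}. For one direction, (P$_0$) is always primal-feasible since $x=\mathbf{0}\in\mathcal{N}(A)\cap K$, so its optimal value $p^\star_0$ satisfies $p^\star_0\le 0$. The dual of (P$_0$) is $\max 0$ subject to $A^Ty+s=c,\ s\in K^*$, whose feasibility coincides with that of \eqref{D}. If \eqref{D} is feasible, the dual of (P$_0$) attains value $d^\star_0=0$, weak duality forces $p^\star_0\ge 0$, hence $p^\star_0=d^\star_0=0$ with primal solution $x^\star=\mathbf{0}$ and some dual solution, giving the desired saddle point. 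Conversely, if \eqref{D} is infeasible, the dual of (P$_0$) is infeasible and $d^\star_0=-\infty\ne p^\star_0$, so no saddle point exists. Combining this equivalence with the reduction above yields both statements of the theorem.

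The main obstacle, modest in this case, is verifying that the shift by $-\gamma Dc$ in the DRS iteration really corresponds to $f_0$ (not to some other splitting) and that the zero-vector argument works cleanly even when $K$ is not full-dimensional; this is handled by directly invoking the derivation in Proposition~1 and by noting that $\mathbf{0}\in K$ holds for every nonempty closed convex cone considered here. Everything else is a straightforward transcription of the arguments used for Theorem~\ref{thm:feas}.
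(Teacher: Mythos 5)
Your argument is correct, but it takes a different route from the paper. You keep the primal interpretation: $T_3$ is the DRS operator of \eqref{P} with $b$ set to $\mathbf{0}$, and then you feed this into Theorem~\ref{thm:drsconv} together with the saddle-point/KKT characterization of Theorem~\ref{thm:0sub}, exploiting the special structure of the modified problem --- $x=\mathbf{0}$ is always primal feasible, the dual objective is identically zero so any dual-feasible point is dual optimal, and complementary slackness at $x^\star=\mathbf{0}$ is automatic --- so that strong duality with attainment holds exactly when \eqref{D} is feasible. The paper instead uses the identities $I=P_{\mathcal{N}(A)}+P_{\mathcal{R}(A^T)}$, $I=P_K+P_{-K^*}$ and the reflection formula to rewrite $T_3=\tfrac12\bigl(I+R_{\mathcal{R}(A^T)-\gamma c}R_{-K^*}\bigr)$, i.e., as the DRS operator of the pure feasibility problem of finding a point in $(\mathcal{R}(A^T)-\gamma c)\cap(-K^*)$, whose consistency is precisely feasibility of \eqref{D}; the conclusion then follows exactly as in Theorem~\ref{thm:feas}. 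Both proofs are valid. The paper's reformulation earns its keep because the same dual-side feasibility view is what lets Theorem~\ref{thm:bestapprox} identify the infimal displacement vector in the companion result Theorem~\ref{thm:impdir2}; your argument is more elementary for Theorem~\ref{thm:impdir3} in isolation, since it avoids the Moreau/reflection manipulations and any constraint-qualification worry, using only theorems already established. One small point worth making explicit in your write-up is that $\tilde T$ and $D$ depend only on $A$ and $K$, so setting $b=\mathbf{0}$ (hence $x_0=\mathbf{0}$) indeed leaves $T_3$ as the exact DRS operator of your problem (P$_0$); you gesture at this via Proposition~1, and it is correct.
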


\begin{proof}[Proof of Theorem~\ref{thm:impdir}]
This result is known \cite{LuoSturmZhang1997_duality},
but we provide a proof that matches our notation.

\eqref{P} has no improving direction if and only if 
\[
\{x\in \mathbb{R}^n|x\in \mathcal{N}(A)\cap K, c^Tx<0\}=\emptyset,
\]
which is equivalent to
$c^Tx\ge 0$ for all $\in \mathcal{N}(A)\cap K$.
This is in turn equivalent to $c\in (\mathcal{N}(A)\cap K)^*$.
So
\[
-c=P_{-(\mathcal{N}(A)\cap K)^*}(-c).
\]
if and only if there is no improving direction,
which holds if and only if
\[
0=P_{\mathcal{N}(A)\cap K}(-c).
\]

Assume there is an improving direction.
Since the projection operator is firmly nonexpansive, we have
\[
0<
\|P_{\mathcal{N}(A)\cap K}(-c)\|^2\le 
(P_{\mathcal{N}(A)\cap K}(-c))^T(-c).
\]
This simplifies to
\[
(P_{\mathcal{N}(A)\cap K}(-c))^Tc<0,
\]
and we conclude $P_{\mathcal{N}(A)\cap K}(-c)$ is an improving direction.

Using the fact that $(\mathcal{N}(A)\cap K)^*=\overline{K^*+\mathcal{R}(A^T)}$, 
we have
\[
P_{\mathcal{N}(A)\cap K}(-c)=
-P_{\mathcal{N}(A)\cap K}(c)=
(P_{\overline{K^*+\mathcal{R}(A^T)}}-I)(c)=
P_{\overline{K^*+\mathcal{R}(A^T)-c}}(\mathbf{0}),
\]
where we have used the identity $I=P_{\mathcal{N}(A)\cap K}+P_{\overline{K^*+\mathcal{R}(A^T)}}$ in the second equality.
\end{proof}

\begin{proof}[Proof of Theorem~\ref{thm:impdir2} and \ref{thm:impdir3}]
Using the identities
$I=P_{\mathcal{N}(A)}+P_{\mathcal{R}(A^T)}$, $I=P_{K}+P_{-K^*}$,
and
$R_{\mathcal{R}(A^T)-\gamma c}(z)=R_{\mathcal{R}(A^T)}(z)-2\gamma Dc$,
we have
\[
T_3(z)=\tilde{T}(z)-\gamma Dc=
\frac{1}{2}(I+R_{\mathcal{R}(A^T)-\gamma c}R_{-K^*})(z).
\]
In other words, we can interpret the fixed point iteration
\[
z^{k+1}=\tilde{T}(z^k)-\gamma Dc
\]
as the DRS iteration on 
\[
\begin{array}{ll}
\mbox{minimize}&0\\
\mbox{subject to}& x\in \mathcal{R}(A^T)-\gamma c\\
&x\in -K^*.
\end{array}
\]
This proves Theorem~\ref{thm:impdir3}.

Using Lemma \ref{lem:infd}, 
applying  Theorem~3.4 of \cite{BauschkeCombettesLuke2004_finding} as we did for Theorem \ref{thm:bestapprox},
and applying Theorem~\ref{thm:impdir}, we get
\begin{align*}
z^k-z^{k+1} &\rightarrow P_{\overline{\ran ({I-T_3}})}(\mathbf{0})\\
&=P_{\overline{-K^*-\mathcal{R}(A^T)}+\gamma c}(\mathbf{0})\\
&=-\gamma P_{\overline{K^*+\mathcal{R}(A^T)-c}}(\mathbf{0})\\
&=-\gamma P_{\mathcal{N}(A)\cap K}(-c).
\end{align*}

\end{proof}

\subsection{Modifying the objective to achieve finite optimal value}

Similar to \ref{thm:achieve_feas}, we can achieve strong feasibility of \eqref{D} by modifying $c$, and \eqref{P} will have a finite optimal value.

\begin{theorem}[Achieving finite $p^{\star}$]
\label{thm:achieve_finite}
Let $w=P_{\overline{K^*+\mathcal{R}(A^T)-c}}(\mathbf{0})$, and let $s$ be any vector satisfying $s\in \relint{K^*}$. If \eqref{P} is feasible and has an unbounded direction, then by replacing $c$ with $c'=c+w+s$, \eqref{P} will have a finite optimal value. 

\end{theorem}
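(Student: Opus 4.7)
The plan is to mirror the proof of Theorem~\ref{thm:achieve_feas}, with the roles of primal and dual swapped and the subspace $\mathcal{R}(A^T)$ replaced by $\mathcal{N}(A)$. Concretely, I will show that $c' = c + w + s$ makes the modified dual problem \emph{strongly feasible}, i.e., that there exist $y$ and $s_0\in\relint K^*$ with $A^Ty+s_0=c'$. Since replacing $c$ by $c'$ does not affect the primal feasible set, \eqref{P} with $c'$ is still feasible, and weak duality will then yield $-\infty < d^{\star} \le p^{\star} < \infty$.

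The first step is to prove the dual analog of Lemma~\ref{lem:v_interpret}: namely, that $w\in\mathcal{N}(A)$ and $w + P_{\mathcal{N}(A)}(c)\in \overline{P_{\mathcal{N}(A)}(K^*)}$. For this, note that for any $u\in\mathbb{R}^n$ the coset $u+\mathcal{R}(A^T)$ is unchanged when $u$ is replaced by $P_{\mathcal{N}(A)}(u)$, so
\[
K^*+\mathcal{R}(A^T)-c \;=\; P_{\mathcal{N}(A)}(K^*) - P_{\mathcal{N}(A)}(c) + \mathcal{R}(A^T).
\]
Because $\mathcal{R}(A^T)$ is a closed subspace orthogonal to $\mathcal{N}(A)$, closing and projecting $\mathbf{0}$ onto this orthogonal sum decouples: the $\mathcal{R}(A^T)$ component of the projection is $\mathbf{0}$ and the $\mathcal{N}(A)$ component is $P_{\overline{P_{\mathcal{N}(A)}(K^*)}}(P_{\mathcal{N}(A)}(c)) - P_{\mathcal{N}(A)}(c)$. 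This gives the claim.

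Next, from $s\in\relint K^*$ Lemma~\ref{lem:linrelint} yields $P_{\mathcal{N}(A)}(s)\in\relint P_{\mathcal{N}(A)}(K^*)$. Since $P_{\mathcal{N}(A)}(K^*)$ is a convex cone, applying Lemma~\ref{lem:coneint} exactly as in the proof of Theorem~\ref{thm:achieve_feas} (where the first summand is allowed to lie in the closure) gives
\[
w + P_{\mathcal{N}(A)}(c) + P_{\mathcal{N}(A)}(s) \;\in\; \relint P_{\mathcal{N}(A)}(K^*) \;=\; P_{\mathcal{N}(A)}(\relint K^*).
\]
Since $w\in\mathcal{N}(A)$, the left-hand side equals $P_{\mathcal{N}(A)}(c')$, so there is $s_0\in\relint K^*$ with $c'-s_0\in\mathcal{R}(A^T)$. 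Choosing $y$ with $A^Ty=c'-s_0$ exhibits $(y,s_0)$ as a strongly feasible point of the modified dual, so $d^{\star}\ge b^Ty>-\infty$. Combined with primal feasibility, this gives a finite $p^{\star}$.

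I expect the main technical hurdle to be the first step, namely justifying that the closure and the projection of $\mathbf{0}$ both commute cleanly with the $\mathcal{N}(A)\oplus\mathcal{R}(A^T)$ decomposition, so that $w$ lives in $\mathcal{N}(A)$ and $w+P_{\mathcal{N}(A)}(c)$ lands in $\overline{P_{\mathcal{N}(A)}(K^*)}$. Once this dual analog of Lemma~\ref{lem:v_interpret} is in place, the remainder is a direct transcription of the proof of Theorem~\ref{thm:achieve_feas}, and the unboundedness hypothesis is used only to explain why the modification is needed in the first place—the construction itself works verbatim whenever \eqref{P} is feasible.
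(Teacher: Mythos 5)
Your proposal is correct and follows essentially the same route as the paper's own (much terser) proof: establish the dual analog of Lemma~\ref{lem:v_interpret} for $w$, conclude as in Theorem~\ref{thm:achieve_feas} that the dual with $c'=c+w+s$ is strongly feasible, and finish by weak duality together with primal feasibility. You simply fill in the details the paper leaves as ``similar to,'' including the correct observation that the relative-interior lemma must be applied with one summand in the closure.
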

\begin{proof}[Proof of Theorem~\ref{thm:achieve_finite}]
Similar to Lemma~\ref{lem:v_interpret}, we have 
\[
w=P_{\overline{P_{\mathcal{N}(A)}(K^*)}-P_{\mathcal{N}(A)(c)}}(\mathbf{0}).
\]
And similar to Theorem~\ref{thm:achieve_feas}, the new constraint of \eqref{D} 
$$K^*\cap \{c+w+s-A^Ty\}$$ 
is strongly feasible. The constraint of \eqref{P} is still $K\cap\{x\,|\,Ax=b\}$, which is feasible. By weak duality of we conclude that the optimal value of \eqref{P} becomes finite.
\end{proof}
\subsection{Other cases}
\label{ss:other}
So far, we have discussed how to identify and certify cases (a), (d), (f), and (g).
We now discuss sufficient conditions 
to certify the remaining cases.

The following theorem follows from weak duality.
\begin{theorem}[\cite{Rockafellar1974_conjugate} Certificate of finite $p^{\star}$]
\label{thm:finitep}
If \eqref{P} and \eqref{D} are feasible, then $p^{\star}$ is finite.
\end{theorem}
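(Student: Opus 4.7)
The plan is to invoke weak duality, which is the standard route for this kind of statement. The theorem asserts finiteness of $p^\star$ under mutual feasibility, and the natural two-sided argument is: feasibility of \eqref{P} gives $p^\star < \infty$ by definition, so the real content is the lower bound $p^\star > -\infty$, which will come from the existence of a dual-feasible point.

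Concretely, I would proceed as follows. First I would pick any primal-feasible $x$ (which exists by the hypothesis that \eqref{P} is feasible) and any dual-feasible $(y,s)$ (which exists by the hypothesis that \eqref{D} is feasible). Then I would compute
\begin{equation*}
c^T x - b^T y = (A^T y + s)^T x - (Ax)^T y = s^T x,
\end{equation*}
using $c = A^T y + s$ and $b = Ax$. Since $x \in K$ and $s \in K^*$, the definition of the dual cone gives $s^T x \geq 0$, hence $c^T x \geq b^T y$. Taking the infimum over primal-feasible $x$ and the supremum over dual-feasible $(y,s)$ yields $p^\star \geq d^\star$.

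From here the conclusion is immediate: feasibility of \eqref{D} implies $d^\star \in \mathbb{R} \cup \{\infty\}$, and in particular $d^\star > -\infty$ (since the supremum is taken over a nonempty set and every dual-feasible $(y,s)$ gives a finite value $b^T y$), so $p^\star \geq d^\star > -\infty$. Combined with $p^\star < \infty$ from primal feasibility, this gives $p^\star \in \mathbb{R}$.

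There is no real obstacle here; the only thing to be slightly careful about is the elementary chain of equalities leading to $s^T x$, and the observation that $d^\star > -\infty$ follows just from \eqref{D} being feasible (i.e., the supremum being over a nonempty set of finite real values $b^T y$). This is the same weak duality argument cited in \cite{Rockafellar1974_conjugate}.
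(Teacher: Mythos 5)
Your proof is correct and is exactly the weak duality argument the paper invokes (the paper simply states that the theorem "follows from weak duality" and cites Rockafellar, without writing out the computation). Your explicit chain $c^Tx - b^Ty = s^Tx \ge 0$ for feasible $x$ and $(y,s)$, giving $-\infty < d^\star \le p^\star < \infty$, fills in precisely that standard argument.
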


Based on Theorem~\ref{thm:impdir3},
we can determine whether \eqref{D} is feasible with the iteration $z^{k+1}=T_3(z^k)=\tilde{T}(z^k)-\gamma Dc$,

with any starting point $z^0$ and $\gamma>0$.
 \begin{itemize}
 \item
 $\lim_{k\rightarrow\infty}\|z^{k}\|<\infty$ if and only if \eqref{D} is feasible.
 \item
 $\lim_{k\rightarrow\infty}\|z^{k}\|=\infty$ if and only if \eqref{D} is infeasible.
 \end{itemize}
With a finite number of iterations,
we test 
$\|z^k\|\ge M$
for some large $M>0$.
However, distinguishing the two cases can be numerically difficult
as the rate of $\|z^k\|\rightarrow \infty$ can be very slow.

\begin{theorem}[Primal iterate convergence]
\label{thm:pconv}
Consider the DRS iteration as defined in \eqref{eq:drs2}
with any starting point $z^0$.
Assume \eqref{P} is feasible, if $x^{k+1/2}\rightarrow x^\infty$ and $x^{k+1}\rightarrow x^\infty$,
then $x^\infty$ is primal optimal,
even if $z^k$ doesn't converge.
\end{theorem}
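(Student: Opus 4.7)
The plan is to first verify that $x^\infty$ is feasible for \eqref{P} and then to argue its optimality by extracting dual information from the two prox steps of \eqref{eq:drs2} and taking limits carefully. Feasibility is the easy part: $x^{k+1/2}=P_K(z^k)\in K$ for all $k$ gives $x^\infty\in K$ by closedness of $K$, while a direct calculation from $x^{k+1}=D(2x^{k+1/2}-z^k)+x_0-\gamma Dc$, together with $AD=0$ and $Ax_0=b$, shows $Ax^{k+1}=b$ for every $k$, hence $Ax^\infty=b$ in the limit.

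For optimality, I would extract two ``dual-like'' sequences from the prox steps. The projection identity for $x^{k+1/2}=P_K(z^k)$ yields $\hat{s}^k:=(x^{k+1/2}-z^k)/\gamma\in K^*$ with $(\hat{s}^k)^T x^{k+1/2}=0$. The optimality condition for $x^{k+1}=\Prox_{\gamma f}(2x^{k+1/2}-z^k)$, with $f(x)=c^T x+\delta_{\{Ax=b\}}(x)$, supplies a multiplier $\mu^{k+1}\in\mathbb{R}^m$ such that $A^T\mu^{k+1}+\breve{s}^{k+1}=c$, where $\breve{s}^{k+1}:=(x^{k+1/2}-z^{k+1})/\gamma$; a short calculation gives $\breve{s}^{k+1}-\hat{s}^k=(x^{k+1/2}-x^{k+1})/\gamma$, which tends to $\mathbf{0}$ by hypothesis. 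Combining the subgradient inequality $c^T x - c^T x^{k+1}\ge(\breve{s}^{k+1})^T(x-x^{k+1})$ (valid for any $x$ with $Ax=b$) with $\hat{s}^k\in K^*$, $\hat{s}^k\cdot x^{k+1/2}=0$, and $\hat{s}^k\cdot x\ge 0$ whenever $x\in K$, I would arrive at
\[
c^T x - c^T x^{k+1}\;\ge\;-\hat{s}^k\cdot(x^{k+1}-x^{k+1/2})\;+\;o(1)
\]
for every feasible $x$.

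The main obstacle is then to show that the right-hand side tends to a nonnegative limit. By hypothesis $x^{k+1}-x^{k+1/2}=z^{k+1}-z^k\to\mathbf{0}$, but $\hat{s}^k$ can be unbounded when $z^k$ diverges, so one cannot simply pass to the limit. I would split into cases. If $\{\hat{s}^k\}$ has a bounded subsequence $\{\hat{s}^{k_j}\}$, then $\hat{s}^{k_j}\cdot(x^{k_j+1}-x^{k_j+1/2})\to 0$, and a Bolzano--Weierstrass extraction (using the full row rank of $A$ to pin down $\mu^{k_j+1}$ in the limit) produces a KKT triple $(x^\infty,\mu^\infty,s^\infty)$ certifying primal optimality of $x^\infty$ via weak duality. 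If instead $\|\hat{s}^k\|\to\infty$, rescaling yields a nonzero limit direction $u\in K^*\cap\mathcal{R}(A^T)$ with $u\cdot x^\infty=0$; writing $u=A^T\nu$ then forces $u\cdot x=\nu^T b=0$ for every feasible $x$, so the entire feasible set is confined to the hyperplane $\{u\}^\perp$, and iterating this reduction on successively smaller subspaces (a finite process in $\mathbb{R}^n$) eventually returns to the bounded case. Executing this facial-reduction-like step cleanly is the delicate part of the argument; the rest is a direct calculation from the prox-step optimality conditions.
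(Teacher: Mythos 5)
Your setup is sound and, up to rearrangement, reproduces the paper's starting point: feasibility of $x^\infty$ from closedness of $K$ and $Ax^{k+1}=b$, the normal-cone facts $\hat{s}^k=(x^{k+1/2}-z^k)/\gamma\in K^*$, $(\hat{s}^k)^Tx^{k+1/2}=0$, the prox optimality condition $\breve{s}^{k+1}=(x^{k+1/2}-z^{k+1})/\gamma\in c+\mathcal{R}(A^T)$, and the key inequality $c^Tx-c^Tx^{k+1}\ge -(\hat{s}^k)^T(x^{k+1}-x^{k+1/2})+o(1)$ for feasible $x$. But the step where the two arguments must diverge --- controlling the product of the possibly unbounded $\hat{s}^k$ with the vanishing increment $x^{k+1}-x^{k+1/2}=z^{k+1}-z^k$ --- is exactly where your proposal has a genuine gap. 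Your Case 1 (a bounded subsequence of $\hat{s}^k$) is fine, but it produces a full KKT triple and hence a dual solution with zero gap; it therefore cannot cover the situations the theorem is really aimed at (case (b), where no dual solution exists and necessarily $\|\hat{s}^k\|\to\infty$). In Case 2, the facial-reduction-like argument establishes only a geometric fact about the feasible set: the limiting direction $u\in K^*\cap\mathcal{R}(A^T)$ annihilates every feasible point. That gives no control whatsoever over $u^T(x^{k+1}-x^{k+1/2})$, since the iterates $x^{k+1/2},x^{k+1}$ need not lie in $u^\perp$; the troublesome term $-(u^T\hat{s}^k)\,u^T(x^{k+1}-x^{k+1/2})$ is an indeterminate product of a quantity tending to $+\infty$ with one tending to $0$, of uncontrolled sign and rate. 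Moreover, when you iterate the reduction, the normalized limits of the residual components of $\hat{s}^k$ orthogonal to previously extracted directions need not lie in $K^*$, so even the geometric step does not survive beyond the first direction; the claim that the process ``returns to the bounded case'' is unsupported.

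The paper avoids limits of dual variables altogether. It rewrites the right-hand side of the combined subgradient inequality as $(1/\gamma)(x^{k+1}-x^{k+1/2})^T(x-z^{k+1})$, notes that $z^{k+1}=z^0+\sum_{i}\Delta^i$ with $\Delta^i=z^i-z^{i-1}=x^i-x^{i-1/2}\to\mathbf{0}$, and then invokes the purely combinatorial Lemma~\ref{lem:delta}: for any sequence, $\liminf_k(\Delta^k)^T\sum_{i\le k}(-\Delta^i)\le 0$, which follows from $\sum_{j\le k}(\Delta^j)^T\sum_{i\le j}\Delta^i=\tfrac12\bigl\|\sum_{i\le k}\Delta^i\bigr\|^2+\tfrac12\sum_{i\le k}\|\Delta^i\|^2\ge 0$. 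Since the left-hand side of the key inequality converges (to $c^Tx^\infty-c^Tx$), a liminf bound on the right-hand side suffices. Some such summation argument exploiting the telescoping structure of $z^k$ (or an equivalent device) is the missing ingredient in your proposal; without it, the unbounded-dual case is not handled.
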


When running the fixed-point iteration with $T_1(z)=\tilde{T}(z)+x_0-\gamma Dc$,
if $\|z^k\|\rightarrow \infty$ but $x^{k+1/2}\rightarrow x^\infty$ and $x^{k+1}\rightarrow x^\infty$,
then we have case (b), but the converse is not necessarily true.

\paragraph{Examples for Theorem~\ref{thm:finitep}.}
Consider the following problem in case (c):
\begin{equation*}
\begin{array}{ll}
\mbox{minimize}& x_3\\
\mbox{subject to}& x_1=\sqrt{2} \\
&2x_2x_3\ge x_1^2.
\end{array}
\end{equation*}
Its dual problem is
\[
\begin{array}{ll}
\mbox{maximize}& \sqrt{2}y\\
\mbox{subject to} & y^2\le 1,
\end{array}
\]
which is feasible.
Based on diagnostics discussed in the previous sections
and the fact that the dual problem is feasible,
one can conclude that 
we have either case (b) or (c) but not case (e).

Consider the following problem in case (e):
\begin{equation*}
\begin{array}{ll}
\mbox{minimize}& x_1\\
\mbox{subject to}& x_2=1 \\
&2x_2x_3\ge x_1^2
\end{array}
\end{equation*}
Its dual problem is
\[
\begin{array}{ll}
\mbox{maximize}& y\\
\mbox{subject to} & 1\le 0,
\end{array}
\]
which is infeasible.
The diagnostics discussed in the previous sections
allows us to conclude that we have case
(b), (c), or (e).
The fact that the dual problem is infeasible
may suggest that we have case (e), there is no such guarantee.
Indeed, the dual must be infeasible 
if we have case (e),
but the converse is not necessarily true.

\paragraph{Example for Theorem~\ref{thm:pconv}}
Consider the following problem in  case (b):
\begin{equation*}
\begin{array}{ll}
\mbox{minimize}& x_2\\
\mbox{subject to}& x_1=x_3=1\\
&x_3\ge \sqrt{x_1^2+x_2^2}.
\end{array}
\end{equation*}
When we run the iteration \eqref{eq:drs2},
we can empirically observe
that $x^{k+1/2}\rightarrow x^\star$ and $x^{k+1}\rightarrow x^\star$,
and conclude that we have case (b).

Again, consider the following problem in case (e):
\begin{equation*}
\begin{array}{ll}
\mbox{minimize}& x_1\\
\mbox{subject to}& x_2=1 \\
&2x_2x_3\ge x_1^2
\end{array}
\end{equation*}
When we run the iteration \eqref{eq:drs2},
we can empirically observe
that $x^{k+1/2}$ and $x^{k+1}$ do not converge.
The diagnostics discussed in the previous sections
allows us to conclude that we have case
(b), (c), or (e).
The fact that $x^{k+1/2}$ and $x^{k+1}$ do not converge
may suggest that we have case (c) or (e), but there is no such guarantee.
Indeed, $x^{k+1/2}$ and $x^{k+1}$ must not converge
when we have case (c) or (e),
but the converse is not necessarily true.

\paragraph{Counterexample for Theorem~\ref{thm:finitep} and \ref{thm:pconv}}
The following example shows that
the converses of Theorem~\ref{thm:finitep} and \ref{thm:pconv} are not true.
Consider the following problem in case (b):
\begin{equation*}
\begin{array}{ll}
\mbox{minimize}& x_1\\
\mbox{subject to}& x_2-x_3=0 \\
\mbox{}& x_3\ge \sqrt{x_1^2+x_2^2},
\end{array}
\end{equation*}
which has the solution set $\{(0,t,t)\,|\,t\in R\}$
and optimal value $p^\star=0$.
Its dual problem is
\[
\begin{array}{ll}
\mbox{maximize} &0\\
\mbox{subject to} &y\ge \sqrt{y^2+1},
\end{array}
\]
which is infeasible.
This immediately tells us that
$p^\star>-\infty$ is possible even when $d^\star=-\infty$.

Furthermore, the $x^{k+1/2}$ and $x^{k+1}$ iterates do not converge even though there is a solution.
Given $z^0=(z^0_1,z^0_2,0)$, the iterates
$z^{k+1}=(z^{k+1}_1, z^{k+2}_2,z^{k+1}_3)$ are:
\begin{align*}
 z^{k+1}_1&=\frac{1}{2}z^k_1-\gamma \\
 z^{k+1}_2&=\frac{1}{2}z^k_2+\frac{1}{2}\sqrt{(z^k_1)^2+(z^k_2)^2}\\
 z^{k+1}_3&=0.
\end{align*}
So $x^{k+1/2}=P_{K}(z^k)$ satisfies $x^k_1\rightarrow -2\gamma, x^k_2\rightarrow \infty$ and $x^k_3\rightarrow \infty$,
and we can see that $x^{k+1/2}$ does not converge to the solution set.

\begin{proof}[Proof of Theorem~\ref{thm:pconv}]
Define 
\begin{align*}
x^{k+1/2}&=\Prox_{\gamma g}(z^k)\\
x^{k+1}&=\Prox_{\gamma f}(2x^{k+1/2}-z^k)\\
z^{k+1}&=z^k+x^{k+1}-x^{k+1/2}
\end{align*}
as in \eqref{eq:drs2}
Define
\begin{align*}
\tilde{\nabla}g(x^{k+1/2})&=(1/\gamma)(z^k-x^{k+1/2})\\
\tilde{\nabla}f(x^{k+1})&=(1/\gamma)(2x^{k+1/2}-z^k-x^{k+1}).
\end{align*}
It's simple to verify that
\begin{align*}
\tilde{\nabla}g(x^{k+1/2})&\in\partial g(x^{k+1/2})\\
\tilde{\nabla}f(x^{k+1})&\in\partial f(x^{k+1}).
\end{align*}

Clearly,
\[
\tilde{\nabla}g(x^{k+1/2})+\tilde{\nabla}f(x^{k+1})=(1/\gamma)(x^{k+1/2}-x^{k+1}).
\]
We also have
\[
z^{k+1}=z^k
-\gamma \tilde{\nabla}g(x^{k+1/2})-\gamma \tilde{\nabla}f(x^{k+1})=x^{k+1/2}-\gamma \tilde{\nabla}f(x^{k+1})
\]

Consider any $x\in K\cap \{x\,|\,Ax=b\}$. Then, by convexity of $f$ and $g$,
\begin{align*}
g(x^{k+1/2})-g(x)+f(x^{k+1})-f(x)
&\le 
\tilde{\nabla}g(x^{k+1/2})^T(x^{k+1/2}-x)
+
\tilde{\nabla}f(x^{k+1})^T(x^{k+1}-x)\\
&=
(\tilde{\nabla}g(x^{k+1/2})+\tilde{\nabla}f(x^{k+1}))^T(x^{k+1/2}-x)
+\tilde{\nabla}f(x^{k+1})^T(x^{k+1}-x^{k+1/2})\\
&=(x^{k+1}-x^{k+1/2})^T
(\tilde{\nabla}f(x^{k+1})-(1/\gamma)(x^{k+1/2}-x))\\
&=(1/\gamma)(x^{k+1}-x^{k+1/2})^T(x-z^{k+1})
\end{align*}
We take the liminf on both sides and use
Lemma~\ref{lem:delta} below to get
\[
g(x^{\infty})+f(x^{\infty})\le g(x)+f(x).
\]

Since this holds for any $x\in K\cap \{x\,|\,Ax=b\}$, $x^\infty$ is optimal.
\end{proof}

\begin{lemma}
\label{lem:delta}
Let $\Delta^1,\Delta^2,\dots$ be a sequence in $\mathbb{R}^n$.
Then
\[
\liminf_{k\rightarrow\infty} (\Delta^k)^T\sum^k_{i=1}(-\Delta^i)\le 0.
\]
\end{lemma}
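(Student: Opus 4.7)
The plan is to transform the claim into its contrapositive form and exploit a telescoping identity. Setting $S^k = \sum_{i=1}^k \Delta^i$ with $S^0 = \mathbf{0}$, the claim $\liminf_k (\Delta^k)^T(-S^k) \le 0$ is equivalent to $\limsup_k (\Delta^k)^T S^k \ge 0$, so I will aim to prove the latter. The natural identity to use is the expansion
\[
\|S^k\|^2 = \|S^{k-1} + \Delta^k\|^2 = \|S^{k-1}\|^2 + 2(S^{k-1})^T\Delta^k + \|\Delta^k\|^2,
\]
which, after adding $\|\Delta^k\|^2$ to $(S^{k-1})^T\Delta^k$ to convert it to $(S^k)^T\Delta^k$, yields the key per-step identity
\[
(\Delta^k)^T S^k = \tfrac{1}{2}\bigl(\|S^k\|^2 - \|S^{k-1}\|^2\bigr) + \tfrac{1}{2}\|\Delta^k\|^2.
\]

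Next I would telescope: summing this from $k=1$ to $N$ and using $S^0=\mathbf{0}$ gives
\[
\sum_{k=1}^N (\Delta^k)^T S^k = \tfrac{1}{2}\|S^N\|^2 + \tfrac{1}{2}\sum_{k=1}^N\|\Delta^k\|^2 \ge 0.
\]
Hence the partial sums of the sequence $a_k := (\Delta^k)^T S^k$ are nonnegative for every $N$.

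Finally, I would conclude by contradiction: if $\limsup_k a_k < 0$, then there would exist $\varepsilon>0$ and $K$ such that $a_k \le -\varepsilon$ for all $k \ge K$, which would force $\sum_{k=1}^N a_k \to -\infty$, contradicting the nonnegativity above. Therefore $\limsup_k a_k \ge 0$, which rewrites as $\liminf_k (\Delta^k)^T\sum_{i=1}^k(-\Delta^i) \le 0$.

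The only mild subtlety is recognizing which combination of $\|S^k\|^2$, $\|S^{k-1}\|^2$, and $\|\Delta^k\|^2$ telescopes cleanly; once one notices that $(\Delta^k)^T S^k$ (rather than $(\Delta^k)^T S^{k-1}$) is the right quantity to track, the argument is elementary and requires no hypothesis on the sequence beyond it lying in $\mathbb{R}^n$.
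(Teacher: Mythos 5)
Your proof is correct and follows essentially the same route as the paper: both hinge on the nonnegativity of the partial sums $\sum_{k=1}^N (\Delta^k)^T S^k = \tfrac{1}{2}\|S^N\|^2 + \tfrac{1}{2}\sum_{k=1}^N\|\Delta^k\|^2$ and then conclude by a contradiction with an $\varepsilon$-bound on the eventual terms. The only difference is cosmetic: you obtain this identity by per-step telescoping, while the paper rearranges the double sum $\sum_{j}\sum_{i\le j}(\Delta^j)^T\Delta^i$ directly.
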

\begin{proof}
Assume for contradiction that 
\[
\liminf_{k\rightarrow\infty} (\Delta^k)^T\sum^k_{i=1}(-\Delta^i)>2\varepsilon
\]
for some $\varepsilon>0$.
Since the initial part of the sequence is irrelevant,
assume without loss of generality
that
\[
(\Delta^j)^T\sum^j_{i=1}\Delta^i<-\varepsilon
\]
for $j=1,2,\dots$,
summing both sides gives us, for all $k=1,2,...$
\begin{align*}
\sum^k_{j=1}
(\Delta^j)^T\sum^j_{i=1}\Delta^i<-\varepsilon k.\\
\end{align*}
Define
\[ \mathbbm{1}\{i\le j\}=
\begin{cases}
1, \text{if}\,\, i\leq j,\\
0, \text{otherwise}.
\end{cases}
\]
We have
\begin{align*}
\sum^k_{j=1}
 \sum^k_{i=1}
 (\Delta^j)^T\Delta^i
 \mathbbm{1}\{i\le j\}
 <-\varepsilon k,\\
0\leq \frac{1}{2}
\left\|
\sum^k_{i=1}\Delta^i\right\|^2
+
\frac{1}{2}
\sum^k_{i=1}
\left\|\Delta^i\right\|^2
<-\varepsilon k,  
\end{align*}
which is a contradiction.
\end{proof}

\subsection{The algorithms}
\label{ss:alg}
In this section, we collect the discussed classification results
as thee algorithms.
The full algorithm is simply running
Algorithms~\ref{alg1}, \ref{alg2}, and \ref{alg3},
and applying flowchart of Figure~\ref{Fig.1}.

\begin{algorithm}
\caption{Finding a solution}
\label{alg1}
\begin{algorithmic}[0]
\State Parameters: $\gamma$, $M$, $\varepsilon$, $z^0$
\For{$k=1,\dots$}
\State $x^{k+1/2}=P_K(z^k)$
\State $x^{k+1}=D(2x^{k+1/2}-z^k)+x_0-\gamma Dc$
\State $z^{k+1}=z^k+x^{k+1}-x^{k+1/2}$
\EndFor
\If{$\|z^k\|< M$}
\State Case (a)
\State $x^{k+1/2}$ and $x^{k+1}$ solution
\ElsIf{$x^{k+1/2}\rightarrow x^\infty$ and $x^{k+1}\rightarrow x^\infty$}
\State Case (b)
\State $x^{k+1/2}$ and $x^{k+1}$ solution
\Else
\State Case (b), (c), (d), (e), (f), or (g).
\EndIf
\end{algorithmic}
\end{algorithm}

\begin{algorithm}
\caption{Feasibility test}
\label{alg2}
\begin{algorithmic}[0]
\State Parameters: $M$, $\varepsilon$, $z^0$
\For{$k=1,\dots$}
\State $x^{k+1/2}=P_K(z^k)$
\State $x^{k+1}=D(2x^{k+1/2}-z^k)+x_0$
\State $z^{k+1}=z^k+x^{k+1}-x^{k+1/2}$
\EndFor
\If{$\|z^k\|\ge M$ and $\|z^{k+1}-z^k\|>\varepsilon$}
\State Case (f)
\State Strictly separating hyperplane defined by $(z^{k+1}-z^k,(-v^Tx_0)/2)$
\ElsIf{$\|z^k\|\ge M$ and $\|z^{k+1}-z^k\|\le \varepsilon$}
\State Case (g)
\Else \,\,$\|z^k\|< M$
\State Case (a), (b), (c), (d), or (e)
\EndIf
\end{algorithmic}
\end{algorithm}

\begin{algorithm}
\caption{Boundedness test}
\label{alg3}
\begin{algorithmic}[0]
\State Prerequisite: \eqref{P} is feasible.
\State Parameters: $\gamma$, $M$, $\varepsilon$, $z^0$
\For{$k=1,\dots$}
\State $x^{k+1/2}=P_K(z^k)$
\State $x^{k+1}=D(2x^{k+1/2}-z^k)-\gamma Dc$
\State $z^{k+1}=z^k+x^{k+1}-x^{k+1/2}$
\EndFor
\If{$\|z^k\|\ge M$ and $\|z^{k+1}-z^k\|\ge \varepsilon$}
\State Case (d)
\State Improving direction $z^{k+1}-z^k$
\ElsIf{$\|z^k\|< M$}
\State Case (a), (b), or (c)
\Else
\State Case (a), (b), (c), or (e)
\EndIf
\end{algorithmic}
\end{algorithm}

\section{Numerical Experiments}
We test our algorithm on a library of weakly infeasible SDPs generated by \cite{LiuPataki2017_exact}.
These semidefinite programs are in the form:
\begin{equation*}
\begin{array}{ll}
\mbox{minimize}& C\bullet X\\
\mbox{subject to}& A_i\bullet X=b_i, i=1,...,m \\
\mbox{} &          X\in S^n_{+},
\end{array}
\end{equation*}
where $n=10$, $m=10$ or $20$, and 
$A\bullet B=\sum_{i=1}^{n}\sum_{j=1}^{n} A_{ij}B_{ij}$
denotes the inner product between two $n\times n$ matrices $A$ and $B$.

The library provides ``clean'' and ``messy'' instances.
Given a clean instance, a messy instance is created with
\begin{align*}
A_i&\leftarrow U^T(\sum_{j=1}^{m} T_{ij} A_j)U\,\, \text{for}\,\,i=1,...,m\\
b_i&\leftarrow \sum_{j=1}^{m}T_{ij}b_j\,\,\text{for}\,\,i=1,...,m,
\end{align*}
where $T\in \mathbb{Z}^{m\times m}$ and $U\in \mathbb{Z}^{n\times n}$ are random invertible matrices with entries in $[-2,2]$.

In \cite{LiuPataki2017_exact}, four solvers are tested, specifically, SeDuMi, SDPT3 and MOSEK from the YALMIP environment, and the preprocessing algorithm of Permenter and Parrilo \cite{PermenterParrilo2014_partial} interfaced with SeDuMi.
Table~\ref{Table.1} reports 
the numbers of instances determined infeasible out of 100 weakly infeasible instances. 
The four solvers have varying success in detecting infeasibility of the clean instances,
but none of them succeed in the messy instances.

\begin{table}[!htb]
\begin{minipage}{0.48\textwidth}
\caption{Percentage of infeasibility detection in \cite{LiuPataki2017_exact}}
\centering
\label{Table.1}
\begin{tabularx}{\textwidth}{lllll}
\hline\noalign{\smallskip}
          & \multicolumn{2}{l}{$m=10$} & \multicolumn{2}{l}{$m=20$} \\
          \noalign{\smallskip}
          \cline{2-3}\cline{4-5} 
          \noalign{\smallskip}
          & Clean        & Messy        & Clean        & Messy        \\
          \noalign{\smallskip}\hline\noalign{\smallskip}
SeDuMi    & 0            & 0            & 1            & 0            \\ 
SDPT3     & 0            & 0            & 0            & 0            \\ 
MOSEK     & 0            & 0            & 11           & 0            \\ 
PP+SeDuMi & 100          & 0            & 100            & 0          \\    
\noalign{\smallskip}\hline
\end{tabularx}
\end{minipage}
\hfill
\begin{minipage}{0.48\textwidth}
\caption{Percentage of infeasibility detection success}
\centering
\label{Table.2}
\begin{tabularx}{\textwidth}{lllll}
\hline\noalign{\smallskip}
& \multicolumn{2}{l}{$m=10$} & \multicolumn{2}{l}{$m=20$} \\ \noalign{\smallskip}\cline{2-5}\noalign{\smallskip}
& Clean        & Messy        & Clean        & Messy    \\ \noalign{\smallskip}\hline\noalign{\smallskip}
Proposed method  & 100        & 21   & 100    & 99  
\\ \noalign{\smallskip}\hline
\end{tabularx}
\vspace{0.5cm}
\caption{Percentage of success determination that problems are not strongly infeasible}
\label{Table.3}
\begin{tabularx}{\textwidth}{lllll}
\hline\noalign{\smallskip}
& \multicolumn{2}{l}{$m=10$} & \multicolumn{2}{l}{$m=20$} \\ \noalign{\smallskip}\cline{2-5}\noalign{\smallskip}
& Clean        & Messy        & Clean        & Messy    \\ \noalign{\smallskip}\hline\noalign{\smallskip}
Proposed method & 100          & 100          & 100          & 100          
\\ \noalign{\smallskip}\hline
\end{tabularx}
\end{minipage}
\end{table}

Our proposed method performs better.
However, it does require many iterations
 and does fail with some of the messy instances.
We run the algorithm with $N= 10^7$ iterations
and  label an instance infeasible if $1/\|z^N\|\le 8\times10^{-2}$
 (cf.\ Theorem~\ref{thm:feas} and \ref{thm:sfeas}).
Table~\ref{Table.2} reports
the numbers of instances determined infeasible out of 100 weakly infeasible instances. 

We would like to note that detecting whether or not a problem is strongly infeasible
is easier than detecting whether a problem is infeasible.
With $N=5\times 10^4$
and a tolerance of $\|z^N-z^{N+1}\|<10^{-3}$
(c.f\  Theorem~\ref{thm:sfeas})
our proposed method correctly determined
that all test instances are not strongly infeasible.
Table~\ref{Table.3} reports 
the numbers of instances determined not strongly infeasible out of 100 weakly infeasible instances.

\begin{acknowledgements}
W. Yin would like to thank Professor Yinyu Ye for his question regarding ADMM applied to infeasible linear programs during the 2014 Workshop on Optimization for Modern Computation held at Peking University.
\end{acknowledgements}

\bibliographystyle{spmpsci}
\bibliography{DRSCertificate}

\end{document}